\theoremstyle{plain}
\date{\today}
\title{Multifractal analysis of the divergence of Fourier series: the extreme cases}
\author{Fr\'ed\'eric Bayart, Yanick Heurteaux}
\address{
Clermont Universit\'e, Universit\'e Blaise Pascal, Laboratoire de Math\'ematiques, 
BP 10448, F-63000 CLERMONT-FERRAND -
CNRS, UMR 6620, Laboratoire de Math\'ematiques, F-63177 AUBIERE
}
\email{Frederic.Bayart@math.univ-bpclermont.fr, Yanick.Heurteaux@math.univ-bpclermont.fr}
\subjclass{}
\keywords{Fourier series, Hausdorff dimension, prevalence}
\newcommand{\veps}{\varepsilon}
\def\RR{\mathbb R}
\def\NN{\mathbb N}
\def\ZZ{\mathbb Z}
\def\TT{\mathbb T}
\def\DD{\mathbb D}
\newcommand{\pss}[2]{\ensuremath{{\langle #1,#2\rangle}}}
\newtheorem{theorem}{Theorem}[section]
\newtheorem{lemma}[theorem]{Lemma}
\newtheorem{proposition}[theorem]{Proposition}
\newtheorem{corollary}[theorem]{Corollary}
\theoremstyle{definition}}
\theoremstyle{definition}}
\theoremstyle{definition}}
\theoremstyle{definition}\newtheorem{definition}[theorem]{Definition}}
\theoremstyle{definition}}
\theoremstyle{definition}}
\newenvironment{rem}
{ \noindent {\it Remark\/}: }
{\null \hfill \par\medskip}
\newenvironment{rems}
{ \noindent {\it Remarks\/}: }
{\null \hfill \par\medskip}
\begin{document}

\begin{abstract}
We study the size, in terms of the Hausdorff dimension, of the subsets of $\mathbb T$
such that the Fourier series of a generic function in  $L^1(\TT)$,  $L^p(\TT)$ or in $\mathcal C(\mathbb T)$ may behave badly. Genericity is related to the Baire category theorem or to the notion of prevalence. This paper is a continuation of \cite{BH11}.
\end{abstract}

\maketitle

\section{Introduction}

This paper, which can be seen as a continuation of \cite{BH11}, 
deals with the divergence of Fourier series of functions 
in $L^p(\TT)$, $p\geq 1$, where $\mathbb T=\mathbb R/\mathbb Z$, or in 
$\mathcal C(\TT)$, from the multifractal point of view.
More precisely, let $f$ be in $L^p(\mathbb T)$, or in $\mathcal C(\TT)$, and
let $(S_n f)_{n\ge 0}$ the sequence of partial sums of its Fourier series.
We are interested in the size of the sets of the real numbers $x$ such that 
$(S_nf(x))_{n\ge 0}$ 
diverges with a prescribed growth. 

We will  measure the size of subsets of $\TT$
using the Hausdorff dimension. Let us recall the relevant definitions (we refer to 
\cite{Falc} and to \cite{Mat95} for more on this subject). 
If $\phi:\mathbb R_+\to\mathbb R_+$ is a nondecreasing continuous function
satisfying $\phi(0)=0$ ($\phi$ is called a \emph{dimension function} or a 
\emph{gauge function}),
the $\phi$-Hausdorff outer measure of a set $E\subset \mathbb R^d$ is 
$$\mathcal H^{\phi}(E)=\lim_{\veps\to 0}\inf_{r\in R_\veps(E)}\sum_{B\in r}\phi(|B|),$$
where $R_\veps(E)$ is the set of (countable) coverings of $E$ with balls $B$ of diameter 
$|B|\leq\veps$. 
When $\phi_s(x)=x^s$, we write for short $\mathcal H^s$ instead of $\mathcal H^{\phi_s}$. 
The Hausdorff dimension of a set $E$
is defined by
$$\dim_{\mathcal H}(E):=\sup\{s>0; \mathcal H^s (E)>0\}=\inf\{s>0;
\ \mathcal H^s(E)=0\}.$$

The first result studying the Hausdorff dimension of the divergence sets of
 Fourier series is due to J-M. Aubry \cite{Aub06}.

\begin{theorem}\label{THMAUBRY}
Let $f\in L^p(\mathbb T)$, $1<p<+\infty$. If $\beta\geq 0$, define
$$\mathcal E(\beta,f)=\left\{x\in\mathbb T;\ 
\limsup_{n\to+\infty}n^{-\beta}|S_nf(x)|>0\right\}.$$
Then $\dim_\mathcal{H}\big(\mathcal E(\beta,f)\big)\leq 1-\beta p$. Conversely, 
given a set $E$ such that $\dim_\mathcal{H}(E)<1-\beta p$, there exists
a function $f\in L^p(\mathbb T)$ such that, for any $x\in E$,
$\displaystyle\limsup_{n\to +\infty}n^{-\beta}|S_nf(x)|=+\infty$.
\end{theorem}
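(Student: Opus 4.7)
The plan is to prove the upper and lower bounds separately, with the Carleson--Hunt maximal theorem as the common analytic ingredient.

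\textbf{Upper bound.} The starting point is the Carleson--Hunt theorem: for $1<p<\infty$, the operator $S^*f(x)=\sup_n|S_nf(x)|$ is of strong type $(p,p)$. Applied dyadically, this yields $\bigl|\{x\in\TT:\sup_{2^k\leq n<2^{k+1}}|S_nf(x)|>\lambda\}\bigr|\leq C\lambda^{-p}\|f\|_p^p$ for every $k$. Taking $\lambda\asymp 2^{k\beta}$, the level set $E_k:=\{x:\sup_{2^k\leq n<2^{k+1}}n^{-\beta}|S_nf(x)|>\eta\}$ has Lebesgue measure $\lesssim\eta^{-p}2^{-k\beta p}\|f\|_p^p$. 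This controls only Lebesgue measure, so to obtain a dimension bound I would localize spatially: partition $\TT$ into the $2^k$ dyadic intervals $I$ of length $2^{-k}$ and split $f=f\mathbf 1_{3I}+f\mathbf 1_{(3I)^c}$, using the decay of the Dirichlet kernel away from the diagonal (together with a Littlewood--Paley-type step to absorb logarithmic cross terms) to handle the off-diagonal part, and Carleson--Hunt applied to $f\mathbf 1_{3I}$ for the local part. One should then see that the number $N_k$ of dyadic intervals of length $2^{-k}$ that meet $E_k$ is $\lesssim 2^{k(1-\beta p)}\|f\|_p^p$. Since $\mathcal E(\beta,f)$ is the countable union (over $m\ge 1$) of the sets $\bigcap_K\bigcup_{k\ge K}E_k$ (with $\eta=1/m$), a Borel--Cantelli estimate $\sum_{k\ge K}N_k\cdot 2^{-ks}\to 0$ for $s>1-\beta p$ gives $\mathcal H^s(\mathcal E(\beta,f))=0$, and hence $\dim_{\mathcal H}(\mathcal E(\beta,f))\le 1-\beta p$.

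\textbf{Converse.} Fix $s_0<s_1<1-\beta p$ with $\mathcal H^{s_0}(E)=0$, so that along a rapid sequence $n_k=2^k$ the set $E$ can be covered by at most $n_k^{s_1}$ intervals of length $n_k^{-1}$ centered at points $x_{k,i}$. I would construct $f=\sum_k f_k$ where $f_k$ is a trigonometric polynomial whose spectrum lies in the dyadic band $(n_{k-1},n_k]$, of the form $f_k=a_k\sum_i\Psi_k(\cdot-x_{k,i})$, with $\Psi_k$ a Dirichlet-type block designed so that $|S_{n_k}\Psi_k(0)|\asymp\log n_k$ (a band-limited analogue of a step function). The spectral separation yields $S_{n_k}f=\sum_{j\le k}f_j$, so at $x\in E$ one has $|S_{n_k}f(x)|\ge|f_k(x)|-\sum_{j<k}\|f_j\|_\infty$. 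Choose $a_k=n_k^\beta\omega_k/\log n_k$ with $\omega_k\to\infty$ slowly; the $L^p$ mass satisfies $\|f_k\|_p\asymp a_k\,n_k^{(s_1-1)/p}$, which is summable because $s_1<1-\beta p$ forces $\beta+(s_1-1)/p<0$, while the peak size $|f_k(x_{k,i})|\asymp a_k\log n_k=n_k^\beta\omega_k$ dominates the bounded lower-order contributions. One concludes $\limsup_n n^{-\beta}|S_nf(x)|=+\infty$ on $E$.

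The principal obstacle is the upper bound. A direct Chebyshev estimate on $|S_nf|$ produces only Lebesgue measure, and upgrading it to Hausdorff $s$-content for $s<1$ requires a genuinely local form of Carleson--Hunt: the off-diagonal tail of $D_n$ has to be treated without losing the logarithmic factors that would spoil the sharp exponent $1-\beta p$. The converse, by contrast, is essentially constructive, and once the spectral-band decomposition and the efficient covering of $E$ are in place, reduces to calibrating the amplitudes $a_k$ and the slow divergence $\omega_k$.
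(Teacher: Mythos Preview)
First, note that the paper does not itself prove this theorem; it is quoted from Aubry \cite{Aub06}. However, the paper's $L^1$ analogue (Proposition~\ref{PROPL1}) is an adaptation of Aubry's argument, and that proof reveals the method you should be comparing against.

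Your upper-bound sketch has a genuine gap. The splitting $f=f\mathbf 1_{3I}+f\mathbf 1_{(3I)^c}$ does not localize the problem as you hope: for $x\in I$ with $|I|=2^{-k}$ and $n\sim 2^k$, the Dirichlet-kernel tail only gives
\[
\bigl|S_n(f\mathbf 1_{(3I)^c})(x)\bigr|\;\le\;\int_{|u|\gtrsim 1/n}\frac{C|f(x-u)|}{|u|}\,du
\;\le\;C\|f\|_p\Bigl(\int_{1/n}^{1/2}u^{-p'}du\Bigr)^{1/p'}\asymp\|f\|_p\,n^{1/p},
\]
and $n^{1/p}$ is \emph{larger} than the threshold $n^\beta$ you are trying to detect. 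No Littlewood--Paley step repairs this; the obstruction is the size of the Dirichlet tail in $L^{p'}$, not a logarithm. Consequently the bound $N_k\lesssim 2^{k(1-\beta p)}$ is unsupported as written.

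Aubry's device runs in the opposite direction. Instead of localizing $f$, one localizes the trigonometric polynomial $S_nf$: this is exactly Lemma~\ref{LEML1-3} of the paper (stated for all $p\ge 1$), which says that if $P\in\mathcal E_n(\TT)$ and $|P(a)|\ge\|P\|_p$, then $\|P\|_{L^p(I)}\gtrsim |P(a)|\,|I|^{1/p}(\log n)^{-(1+\veps)/p}$ for any interval $I$ of length $\le 1/n$ centered at $a$. The proof uses a smooth window with super-polynomially decaying Fourier transform together with Nikolsky's inequality. Applying it at each point where $|S_{n_x}f(x)|\ge M n_x^\beta$ gives, on the interval $I_x$ of length $1/n_x$,
\[
\|S_{n_x}f\|_{L^p(I_x)}^p\;\gtrsim\;M^p\,n_x^{\beta p-1}(\log n_x)^{-(1+\veps)}.
\]
One then extracts a Vitali subfamily of disjoint $I_i$ and uses the \emph{global} Carleson--Hunt inequality $\sum_i\int_{I_i}|S_{n_i}f|^p\le\|S^*f\|_p^p\le C\|f\|_p^p$ to obtain $\sum_i|I_i|^{1-\beta p}(\log(1/|I_i|))^{-(1+\veps)}\lesssim M^{-p}$, which is precisely the Hausdorff-content bound. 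No local maximal inequality is needed; the localization lemma transfers the pointwise information into an $L^p$ lower bound that the global maximal inequality can absorb.

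Your converse sketch is reasonable and in the spirit of the known constructions.
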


\medskip

This result motivated us to introduce in \cite{BH11} the notion of divergence index. 
For a given function $f\in L^p(\TT)$ and a given point $x_0\in \TT$, we can define
$\beta(x_0)$ as the infimum of the nonnegative real numbers
$\beta$ such that $|S_nf(x_0)|=O(n^\beta)$. The real number $\beta(x_0)$ will
be called the \emph{divergence index} of the Fourier series of $f$ 
at point $x_0$. It is well-known that, for any function $f\in L^p(\TT)$ 
($1\leq p<+\infty$) and any point $x_0\in\TT$, 
$0\le \beta(x_0)\le 1/p$ (see \cite{Zyg}). Moreover, when $p>1$, Carleson's theorem implies 
that $\beta(x_0)=0$
almost surely. In \cite{BH11}, we gave precise estimates on the size of the level sets of the function
$\beta$. These are defined as

\begin{eqnarray*}
E(\beta,f)&=&\left\{x\in\mathbb T;\ \beta(x)=\beta\right\}\\
&=&\left\{x\in\mathbb T;\ \limsup_{n\to+\infty}\frac{\log |S_nf(x)|}{\log
    n}=\beta\right\}.
\end{eqnarray*}
\begin{theorem}[\cite{BH11}]\label{THMBHLP}
Let $1<p<+\infty$. For quasi-all functions $f\in L^p(\TT)$, for any $\beta\in[0,1/p]$, 
$\dim_\mathcal{H}\big(E(\beta,f)\big)=1-\beta p$.
\end{theorem}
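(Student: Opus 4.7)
Any $x\in E(\beta,f)$ satisfies $\limsup_n n^{-\beta'}|S_nf(x)|=+\infty$ for every $\beta'<\beta$, so $E(\beta,f)\subset\mathcal{E}(\beta',f)$ and Theorem~\ref{THMAUBRY} gives $\dim_{\mathcal H}E(\beta,f)\leq 1-\beta' p$. Letting $\beta'\nearrow\beta$ yields $\dim_{\mathcal H}E(\beta,f)\leq 1-\beta p$ for every $f\in L^p(\TT)$; only the reverse inequality requires genericity.

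\textbf{Lower bound via Baire category.} Since ``$\dim_{\mathcal H}E(\beta,f)\geq s$'' is not $G_\delta$ in $f$, I would prove the stronger statement $\mathcal{H}^{1-\beta p}\bigl(F(\beta,f)\bigr)>0$ on a residual class, where $F(\beta,f)=\{x:\beta(x)\geq\beta\}$ denotes the superlevel set. For $\beta\in\QQ\cap(0,1/p]$ and $j,N\in\NN$, define $O_{\beta,j,N}$ as the set of $f\in L^p(\TT)$ for which there exist disjoint intervals $I_1,\ldots,I_M\subset\TT$ with $|I_i|\leq n_i^{-2}$, integers $n_i\geq N$, and points $x_i\in I_i$ satisfying $|S_{n_i}f(x_i)|>n_i^{\beta-1/j}$ and $\sum_{i=1}^M|I_i|^{1-\beta p}\geq 1$. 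The $L^p$-continuity of $f\mapsto S_nf(x)$ makes $O_{\beta,j,N}$ open; density would follow from a perturbation argument in the spirit of the converse half of Theorem~\ref{THMAUBRY}, replacing any $f$ by $f+\varepsilon g$ with $g$ a lacunary trigonometric polynomial whose partial sums reach size $n^\beta$ on a Cantor set of positive critical Hausdorff measure $\mathcal{H}^{1-\beta p}$. The constraint $|I_i|\leq n_i^{-2}$ together with the Bernstein inequality lets one transfer the divergence from the sampled $x_i$ to any limit point of the Cantor construction. Intersecting $\bigcap_{j,N}O_{\beta,j,N}$ over rational $\beta$ yields a residual class $\mathcal R\subset L^p(\TT)$ with $\mathcal H^{1-\beta p}\bigl(F(\beta,f)\bigr)>0$ for every rational $\beta$.

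\textbf{Passage to level sets and main obstacle.} For $f\in\mathcal R$ and rational $\beta$, Aubry's theorem implies $\mathcal H^{1-\beta p}\bigl(F(\beta',f)\bigr)=0$ for every rational $\beta'>\beta$ (since $\dim F(\beta',f)\leq 1-\beta' p<1-\beta p$), hence by countable subadditivity $\mathcal H^{1-\beta p}\bigl(\{x:\beta(x)>\beta\}\bigr)=0$; therefore $\mathcal H^{1-\beta p}\bigl(E(\beta,f)\bigr)\geq\mathcal H^{1-\beta p}\bigl(F(\beta,f)\bigr)>0$ and $\dim_{\mathcal H}E(\beta,f)\geq 1-\beta p$ for every rational $\beta$. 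The case $\beta=0$ follows from Carleson's theorem, which gives $\beta(x)=0$ at almost every $x$ for any $f\in L^p$ ($p>1$), so $E(0,f)$ has full Lebesgue measure; passage to irrational $\beta\in(0,1/p]$ is subtler because the critical measure $\mathcal H^{1-\beta p}$ does not behave well under rational approximation of $\beta$, and would require either a ``multifractal saturating'' perturbation whose level sets realise all dimensions simultaneously, or an approximation argument exploiting the rational case together with the continuity in $\beta$ of the bound $1-\beta p$. The principal obstacle is the density/perturbation step: producing explicit small-$L^p$-norm functions whose \emph{partial sums} attain size $n^\beta$ on a Cantor set of \emph{positive critical} Hausdorff measure $\mathcal{H}^{1-\beta p}$ — a sharp quantitative converse to Carleson's theorem drawing on Dirichlet-kernel estimates, lacunary trigonometric series, and the Diophantine arithmetic of the saturating set.
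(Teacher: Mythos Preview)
Your upper bound via Theorem~\ref{THMAUBRY} is correct, and your passage from superlevel sets $F(\beta,f)=\{x:\beta(x)\ge\beta\}$ to level sets $E(\beta,f)$ by showing $\mathcal H^{1-\beta p}(\{x:\beta(x)>\beta\})=0$ and subtracting is exactly the mechanism the paper uses (see the $D_\alpha^1,D_\alpha^2$ decomposition in Section~\ref{SECPREVALENCE}).

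The genuine gaps are all in the lower bound, and they are not merely technical. First, your sets $O_{\beta,j,N}$ as defined do not yield $\mathcal H^{1-\beta p}(F(\beta,f))>0$: for $f\in\bigcap_{j,N}O_{\beta,j,N}$ each stage produces a finite disjoint family of intervals carrying large partial sums at isolated points, but nothing forces these families to nest or even to accumulate, so no limiting set of positive $\mathcal H^{1-\beta p}$-measure emerges and the Bernstein transfer you mention has nothing to act on. Second, you rightly flag both the density step and the irrational-$\beta$ passage as unresolved; these are in fact the heart of the matter.

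The paper's route (from \cite{BH11}, and reproduced here for prevalence in Section~\ref{SECPREVALENCE}) sidesteps all three difficulties by fixing the divergence sets \emph{in advance} rather than letting them float in the open conditions. For each $\alpha>1$ one takes the set $D_\alpha$ of dyadic $\alpha$-approximable reals, for which $\mathcal H^{1/\alpha}(D_\alpha)=+\infty$ by the mass transference principle. One then builds explicit saturating polynomials (F\'ejer sums of indicators of unions of dyadic intervals, as in Lemma~\ref{LEMBASIC}) whose partial sums reach size $\sim n^{(1-1/\alpha)/p}$ at \emph{every} point of $D_\alpha$; a standard Baire argument then gives a residual class on which this holds for a countable dense set of $\alpha$'s. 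Your irrational-$\beta$ obstacle dissolves because the $D_\alpha$ are monotone in $\alpha$: if $\alpha_k\nearrow\alpha$ then $D_\alpha\subset\bigcap_k D_{\alpha_k}$, so the divergence index is automatically $\ge\tfrac1p(1-\tfrac1\alpha)$ on all of $D_\alpha$, and one finishes with your superlevel-to-level argument. In short, the ``multifractal saturating'' object you are looking for is not a clever single function but the arithmetic family $(D_\alpha)_{\alpha>1}$, and the open sets in the Baire scheme should encode divergence on these fixed sets rather than on ad hoc finite covers.
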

The terminology "quasi-all" used here is relative to the Baire category theorem. It means that this property is true for a residual
set of functions in $L^p(\TT)$.

\medskip

In the case of continuous functions, the situation breaks down
dramatically. If $(D_n)_{n\ge 0}$ denotes the Dirichlet kernel, 
we can first observe that, when $f\in\mathcal C(\TT)$, 
$$\|S_n f\|_\infty\leq \|D_n\|_1\|f\|_\infty\leq C\|f\|_\infty\log n.$$
This motivated us in \cite{BH11} to introduce the following level sets:
\begin{eqnarray*}
\mathcal F(\beta,f)&=&\left\{x\in\mathbb T;\ \limsup_{n\to+\infty}\,(\log n)^{-\beta}
|S_nf(x)|>0\right\}\\
F(\beta,f)&=&\left\{x\in\mathbb T;\ \limsup_{n\to+\infty}\frac{\log |S_nf(x)|}{\log\log n}=\beta\right\}.
\end{eqnarray*}
Whereas, on $L^p(\TT)$, $1<p<+\infty$, the divergence index takes its biggest value  ($\beta(x)=1/p$) on small sets, 
this is far from being the case on $\mathcal C(\TT)$, as the following very surprizing 
result indicates.
\begin{theorem}[\cite{BH11}]\label{THMBHCT}
For quasi-all functions $f\in\mathcal C(\TT)$, for any $\beta\in[0,1]$, $F(\beta,f)$ is 
non-empty
and has Hausdorff dimension 1.
\end{theorem}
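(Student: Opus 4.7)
The plan is to follow the "saturation plus Baire category" scheme that is standard in multifractal analysis of generic functions, suitably adapted to the logarithmic scale of $\mathcal{C}(\TT)$. Since any subset of $\TT$ has Hausdorff dimension at most $1$, the entire content is a lower bound: for every $\beta\in[0,1]$, one must show $\dim_\H F(\beta,f)\ge 1$, and non-emptiness then follows for free.

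\textbf{Step 1 (saturating function).} I would first construct a single function $g\in\mathcal{C}(\TT)$ that realizes the full spectrum at the logarithmic scale, namely $\dim_\H F(\beta,g)=1$ for every $\beta\in[0,1]$. A natural candidate is a weighted lacunary sum of shifted continuous Dirichlet-type blocks,
$$g(x)=\sum_{k\ge 1}a_k\,\vphi_k(x-x_k),$$
where $(n_k)$ grows very rapidly, $(x_k)$ is a suitably ubiquitous sequence in $\TT$, each $\vphi_k$ is a continuous surrogate of the Dirichlet kernel $D_{n_k}$ with controlled sup norm, and $a_k\downarrow 0$ is chosen so that $g\in\mathcal{C}(\TT)$ while $|S_{n_k}g(x_k)|$ remains of order $\log n_k$. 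A point $x$ that is approximable by the sequence $(x_k)$ with exponent $\alpha$ will then see $S_{n_k}g(x)$ grow like $(\log n_k)^{\beta(\alpha)}$ for an explicit function $\beta(\alpha)$ of the approximation exponent, and Jarn\'ik-type mass-transference estimates for the ubiquitous system $\{(x_k,n_k^{-1})\}$ produce, for every $\beta\in[0,1]$, a subset of $F(\beta,g)$ of Hausdorff dimension $1$.

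\textbf{Step 2 (Baire category and transfer).} For rational parameters $\beta\in[0,1]$, $s<1$ and $\veps,N>0$, I would define a set $\mathcal{O}(\beta,s,\veps,N)\subset\mathcal{C}(\TT)$ as a finite, $N$-truncated open condition that witnesses $\dim_\H F(\beta,f)\ge s$. Openness is immediate from continuity of $f\mapsto S_nf(x)$ for fixed $n$, and density is obtained by the classical perturbation trick: given $f\in\mathcal{C}(\TT)$ and $\eta>0$ small, the function $f+\eta g$ lies in $\mathcal{O}(\beta,s,\veps,N)$ because the bound $|S_nf(x)|\le C\|f\|_\infty\log n$, uniform in $x$, is absorbed in the $(\log n)^{\beta}$ behavior inherited from $\eta g$. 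Intersecting countably many such residual sets over rational $(\beta,s,\veps,N)$ produces a residual set $\mathcal{R}\subset\mathcal{C}(\TT)$ on which $\dim_\H F(\beta,f)=1$ for every rational $\beta\in[0,1]$; the continuous parametrization of the level sets of $g$ by the approximation exponent finally upgrades this to every real $\beta\in[0,1]$.

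\textbf{Main obstacle.} The bulk of the work lies in Step 1: proving that the saturating $g$ produces a divergence index equal to $\beta$ \emph{exactly}, not just $\ge\beta$, on a dimension-$1$ set. The upper bound requires simultaneous control of $|S_ng(x)|$ for all $n$, not only for the lacunary $n_k$, hence a sharp estimate of the cross-interference between the shifted kernel blocks. At the logarithmic scale this is substantially more delicate than in the $L^p$ case of Theorem~\ref{THMBHLP}: the ``budget'' separating two consecutive $\beta$ levels is of order $\log\log n$ rather than $\log n$, so the block construction and the tuning of the sequences $(a_k)$, $(n_k)$, $(x_k)$ must be carried out with considerably higher precision, leaning on fine pointwise estimates of the Dirichlet kernel near the approximation centers.
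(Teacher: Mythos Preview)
Theorem~\ref{THMBHCT} is not proved in the present paper; it is quoted from \cite{BH11}, so there is no proof here to compare against. Independently of that, your proposal has a genuine gap in Step~2. You claim to define open sets $\mathcal{O}(\beta,s,\veps,N)$ witnessing $\dim_\H F(\beta,f)\ge s$, but $F(\beta,f)$ is an \emph{exact} level set: membership of $x$ requires both $\limsup_n\frac{\log|S_nf(x)|}{\log\log n}\ge\beta$ (which can be made open by finite truncation) and $\limsup_n\frac{\log|S_nf(x)|}{\log\log n}\le\beta$ (a closed condition involving all large $n$). Open finite conditions can only capture the superlevel set. In the $L^p$ setting this is harmless, because the a priori upper bound of Theorem~\ref{THMAUBRY} (or Corollary~\ref{CORAUBRYL1}) then isolates the exact level set via a Hausdorff-measure argument (this is exactly how the paper concludes at the end of Section~\ref{SECPREVALENCE}). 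On $\mathcal C(\TT)$ there is no such a priori bound---the very content of the theorem is that \emph{every} level set has dimension $1$---so that mechanism is unavailable. You correctly flag exact control for the saturating $g$ as the main obstacle, but never explain how the Baire argument transfers that exact control to a generic $f$; as written, it does not.

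A smaller but related slip: your density argument says that for arbitrary $f\in\mathcal C(\TT)$ the bound $|S_nf(x)|\le C\|f\|_\infty\log n$ is ``absorbed in the $(\log n)^\beta$ behaviour inherited from $\eta g$.'' For $\beta<1$ this is backwards---$\log n$ dominates $(\log n)^\beta$---so the contribution of $f$ can swamp that of $\eta g$. One should instead perturb only trigonometric polynomials (for which $S_nf$ is eventually constant); but even with that fix, the openness issue above remains.
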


\medskip

However, several questions were left open in \cite{BH11}.

\medskip
{\bf Question 1: what happens on $L^1(\TT)$?} In view of the differences between 
$L^p(\TT)$, $p\in(1,+\infty)$, and $\mathcal C(\TT)$, 
it seems \emph{a priori} not clear what situation should be expected on $L^1(\TT)$. 
Moreover, Carleson's theorem is false on $L^1(\TT)$
and Kolmogorov Theorem ensures that there exist functions in $L^1(\TT)$ with everywhere 
divergent Fourier series. \\
The proof of Theorem \ref{THMBHLP} proceeds in two steps. In a first time, we build a 
residual set of functions in $L^p(\TT)$
such that, if $f$ lies in this residual set and if $0\le\beta\le1/p$, 
$\dim_{\mathcal H}(E(\beta,f))\geq 1-\beta p$. In a second time, we use Theorem 
\ref{THMAUBRY} to conclude that
necessarily $\dim_{\mathcal H}(E(\beta,f))= 1-\beta p$. The first step works as well 
in $L^1(\TT)$ and the trouble comes from Aubry's result,
which uses the Carleson Hunt maximal inequality. In Section \ref{SECL1}, we succeed to 
overcome this difficulty by proving a (very weak!) version of Carleson's
maximal inequality in $L^1(\TT)$ which is sufficient to prove the analogue of Theorem 
\ref{THMAUBRY}. Thus, we will show that
\begin{theorem}\label{THML1}
For quasi-all functions $f\in L^1(\TT)$, for any $\beta\in[0,1]$, 
$$\dim_\mathcal{H}\big(E(\beta,f)\big)=1-\beta.$$
\end{theorem}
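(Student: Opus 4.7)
The proof follows the two-step scheme of Theorem~\ref{THMBHLP}. In the first step, we build a residual subset $\mathcal R\subset L^1(\TT)$ on which, for every $\beta\in[0,1]$, $\dim_{\mathcal H}(E(\beta,f))\ge 1-\beta$. The generic construction of \cite{BH11}, based on saturating countable families of open dense sets of ``bump'' functions whose Fourier partial sums display the prescribed growth $n^\beta$ on Cantor-like sets of dimension $\ge 1-\beta-\eta$, transfers essentially verbatim: the polynomial building blocks lie in every $L^p(\TT)$ and hence in $L^1(\TT)$, and the open dense sets used in \cite{BH11} remain open and dense when the $L^p$-topology is replaced by the $L^1$-topology. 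A countable intersection over rational pairs $(\beta,\eta)$ produces $\mathcal R$.

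The second, harder step establishes an $L^1$-analogue of Theorem~\ref{THMAUBRY}: for every $f\in L^1(\TT)$ and every $\beta\in(0,1]$,
\[
\dim_{\mathcal H}\bigl(\mathcal E(\beta,f)\bigr)\le 1-\beta.
\]
Combined with the inclusion $E(\beta,f)\subset\bigcap_{\beta'<\beta}\mathcal E(\beta',f)$, this yields the matching upper bound. I would argue by contradiction: if $\mathcal H^s(\mathcal E(\beta,f))>0$ for some $s>1-\beta$, Frostman's lemma furnishes a nontrivial positive measure $\mu$ concentrated on $\mathcal E(\beta,f)$ with $\mu(B(x,r))\le Cr^s$. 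The pointwise bound $|D_n(t)|\le\min(2n+1,\pi/|t|)$, combined with a dyadic split of $\TT$ in $|x-y|$ and the Frostman condition, gives
\[
\sup_{y\in\TT}\int_\TT|D_n(x-y)|\,d\mu(x)\;\le\; C_\mu\,n^{1-s},
\]
so Fubini yields $\int_\TT|S_nf|\,d\mu\le C_\mu\,n^{1-s}\|f\|_1$. Along the sparse subsequence $n=2^k$, Markov's inequality followed by Borel--Cantelli (using $s>1-\beta$, so that $(2^k)^{1-s-\beta}$ is summable) already gives $|S_{2^k}f(x)|=o(2^{k\beta})$ for $\mu$-a.e. $x$. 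The remaining task is to control the supremum over $n\in[2^k,2^{k+1}]$, which is precisely the content of the ``very weak'' Carleson-type maximal inequality in $L^1$ announced in the introduction.

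The main obstacle lies in this maximal refinement. Kolmogorov's theorem makes any unweighted $L^1$ maximal inequality hopeless, so the control must be extracted entirely from the dimensional information carried by $\mu$. The single-index estimate above captures the correct exponent $1-s$, but a naive union bound over $n\in[2^k,2^{k+1}]$ costs a factor $2^k$ that destroys summability. Recovering the missing factor---through a Rademacher--Menshov style orthogonality argument against $\mu$, a de la Vall\'ee Poussin smoothing, or a direct convolution estimate on $\int|D_n-D_m|\,d\mu$---is the technical heart of Section~\ref{SECL1}. Once such a maximal inequality is in place, Borel--Cantelli delivers $|S_nf(x)|=o(n^\beta)$ for $\mu$-a.e. $x$, which contradicts $\mathrm{supp}(\mu)\subset\mathcal E(\beta,f)$ and completes the upper bound.
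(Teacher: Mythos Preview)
Your two-step scheme matches the paper's, but for the upper bound the paper takes a route different from your Frostman/Borel--Cantelli argument, and the maximal inequality it proves is not the one you invoke. The paper never introduces a Frostman measure. Its ``very weak'' maximal inequality (Corollary~\ref{CORL1-1}) is with respect to \emph{Lebesgue} measure: $\int_\TT\sup_{n\ge2}|S_nf|/(\log n)^{1+\alpha}\,dx\le C\|f\|_1$, obtained by the elementary device of a measurable selection $x\mapsto n(x)\in\{1,\dots,N\}$ together with the pointwise bound $|D_k(u)|\le\min(CN,C/|u|)$ (Lemmas~\ref{LEML1-1}--\ref{LEML1-2}). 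The dimension bound is then deduced via an Aubry-type localization lemma (Lemma~\ref{LEML1-3})---if $|S_nf(x)|$ is large then $\|S_nf\|_{L^1(I)}$ is comparably large on a short interval $I\ni x$---followed by a Vitali covering whose disjoint pieces are summed against the Lebesgue maximal inequality to control $\sum_i\phi(|I_i|)$ directly (Proposition~\ref{PROPL1}). None of Rademacher--Menshov, de la Vall\'ee Poussin, or an estimate on $\int|D_n-D_m|\,d\mu$ appears.

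Your approach can nevertheless be completed, and more easily than you suggest: the measurable-selection trick works against $\mu$ just as well as against Lebesgue. Splitting $|D_{n(x)}(x-t)|\le\min(CN,C/|x-t|)$ and using the Frostman bound $\mu(B(t,r))\le Cr^s$ dyadically in $|x-t|$ gives $\sup_t\int_\TT|D_{n(x)}(x-t)|\,d\mu(x)\le C_\mu N^{1-s}$ for any measurable $n(\cdot)\le N$, hence $\int_\TT\sup_{n\le N}|S_nf|\,d\mu\le C_\mu N^{1-s}\|f\|_1$. Taking $N=2^{k+1}$ feeds this straight into your Markov/Borel--Cantelli step and removes the interpolation problem entirely; the ``missing factor $2^k$'' from a naive union bound never materializes because the supremum is handled in one stroke. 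With this observation your route actually bypasses both Lemma~\ref{LEML1-3} and the Vitali covering, and is arguably shorter than the paper's.
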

\medskip

{\bf Question 2: what about the size of the set of multifractal functions?} Theorem
\ref{THMBHLP} and Theorem \ref{THML1} say that, in $L^p(\TT)$ ($p\ge 1$), the set of 
multifractal functions
is big in a topological sense. One can ask if it remains big for other points of view. 
We deal here with an infinite-dimensional version
of the notion of "almost-everywhere". This notion, called \emph{prevalence}, has been 
introduced by J. Christensen in \cite{Chr72} and has been
widely studied since then. In multifractal analysis, some properties which are true on 
a dense $G_\delta$-set are also prevalent 
(see for instance \cite{FJK} or \cite{FJ06}), whereas some are not (see for instance 
\cite{FJK} or \cite{Ol10}). This motivated us to examine Theorem \ref{THMBHLP} and Theorem \ref{THML1} under this point of view.
\begin{definition}
Let $E$ be a complete metric vector space. A Borel set $A\subset E$ is called 
\emph{Haar-null} if there exists a compactly supported 
probability measure $\mu$ such that, for any $x\in E$, $\mu(x+A)=0$. If this property 
holds, the measure $\mu$
is said to be \emph{transverse} to $A$.\\
A subset of $E$ is called \emph{Haar-null} if it is contained in a Haar-null Borel set. 
The complement of a Haar-null set
is called a \emph{prevalent} set.
\end{definition}
The following results enumerate important properties of prevalence and show that this 
notion supplies a natural generalization
of "almost every" in infinite-dimensional spaces:
\begin{itemize}
\item If $A$ is Haar-null, then $x+A$ is Haar-null for every $x\in E$.
\item If $\dim(E)<+\infty$, $A$ is Haar-null if and only if it is negligible
  with respect to the  Lebesgue measure.
\item Prevalent sets are dense.
\item The intersection of a countable collection of prevalent sets is prevalent.
\item If $\dim(E)=+\infty$, compacts subsets of $E$ are Haar-null.
\end{itemize}

\smallskip

In Section \ref{SECPREVALENCE}, we will prove the following result.
\begin{theorem}\label{THMPREVALENCE}
Let $1\leq p<+\infty$. The set of functions $f\in L^p(\TT)$ such that, for any 
$\beta\in[0,1/p]$, $\dim_{\mathcal H}(E(\beta,f))=1-\beta p$,
is prevalent.
\end{theorem}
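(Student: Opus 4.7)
The plan is to exhibit an explicit compactly supported transverse probe. Select a function $g\in L^p(\TT)$ with $\dim_{\mathcal H} E(\beta,g)=1-\beta p$ for every $\beta\in[0,1/p]$; its existence is granted by Theorem \ref{THMBHLP} (for $p>1$) or Theorem \ref{THML1} (for $p=1$). Opening up the construction gives, for each $\beta\in(0,1/p]$, a saturation set $F_\beta\subset E(\beta,g)$ of Hausdorff dimension $1-\beta p$ and a single rapidly growing sequence of indices $(n_k)$ with $\sum_k n_k^{-\eta}<+\infty$ for every $\eta>0$, such that $|S_{n_k}g(x)|\ge n_k^{\beta-\veps}$ for every $x\in F_\beta$, every $\veps>0$, and $k$ large. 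Define the probe $\mu:=(t\mapsto tg)_\ast(\mathrm{Leb}|_{[0,1]})$, a compactly supported Borel probability measure on $L^p(\TT)$. The proposition reduces to showing that for every $f\in L^p(\TT)$ and Lebesgue-almost every $t\in[0,1]$, one has $\dim_{\mathcal H} E(\beta,f+tg)=1-\beta p$ for all $\beta\in[0,1/p]$. The upper bound is automatic, for every $t$, from Theorem \ref{THMAUBRY} (or Theorem \ref{THML1}) applied to $f+tg$, via $E(\beta,f+tg)\subset\mathcal{E}(\beta',f+tg)$ for $\beta'<\beta$ and letting $\beta'\to\beta^-$.

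For the lower bound, fix $\beta\in(0,1/p]$ and $s<1-\beta p$, and let $\sigma$ be a Frostman measure on $F_\beta$ with $\sigma(B(x,r))\le Cr^s$. A Fubini argument on $[0,1]\times F_\beta$ with $dt\otimes d\sigma$ rests on two slicewise estimates. The first, in the $t$-direction: for every $x\in F_\beta$, the set $\{t\in[0,1]:\beta_{f+tg}(x)<\beta\}$ has Lebesgue measure zero. Indeed, such a $t$ forces $|S_{n_k}f(x)+tS_{n_k}g(x)|\le n_k^{\beta-\veps}$ for some $\veps>0$ and $k$ large; combined with $|S_{n_k}g(x)|\ge n_k^{\beta-\veps/2}$, this pins $t$ in a $\liminf$ of intervals of length $\approx n_k^{-\veps/2}$, which has Lebesgue measure zero by summability of $(n_k^{-\veps/2})$. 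The second, in the $\sigma$-direction: for $s$ close enough to $1-\beta p$, the set $\{x:\beta_f(x)>\beta\}$ is $\sigma$-negligible. Indeed $\{\beta_f>\beta\}=\bigcup_k\{\beta_f\ge\beta+1/k\}$, each piece of Hausdorff dimension at most $1-(\beta+1/k)p<s$ whenever $k$ is smaller than an $s$-dependent threshold, and an approximation by Frostman measures of exponents $s_n\nearrow 1-\beta p$ handles the remaining pieces. Combining both estimates via Fubini, for Lebesgue-a.e.\ $t$ the $\sigma$-measure of bad $x\in F_\beta$ is zero, so $\sigma(E(\beta,f+tg)\cap F_\beta)>0$ and $\dim_{\mathcal H} E(\beta,f+tg)\ge s$. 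Running $s$ through a countable sequence approaching $1-\beta p$ and intersecting over a countable dense subset of $\beta$'s, together with the uniformity in $\beta$ of the construction of $F_\beta$ (which allows one to pass from a dense set of $\beta$'s to all of $[0,1/p]$ via the monotonicity of $\mathcal{E}(\beta',\cdot)$ in $\beta'$), yields the result.

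The main obstacle is the slicewise estimate in the $\sigma$-direction: the set $\{\beta_f>\beta\}$ can have Hausdorff dimension equal to $1-\beta p$ (the bound supplied by Aubry's theorem is not strict in general), so cutting it from $F_\beta$ could in principle cost dimension. The essential technical point is the coordination between the choice of Frostman exponent $s$ and the decomposition of $\{\beta_f>\beta\}$ into sublevel sets of strictly smaller dimension---equivalently, the verification that the construction of $F_\beta$ is transverse, in the Hausdorff-dimension sense, to the divergence sublevel sets of an arbitrary $L^p$ function---so that the $k\to\infty$ limit can be absorbed without losing dimension.
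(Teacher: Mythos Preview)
Your approach differs substantially from the paper's: you try a one-dimensional probe $\textrm{span}(g)$ together with a Frostman/Fubini argument, whereas the paper builds an $s$-dimensional probe from saturating functions $g_1,\dots,g_s$ with pairwise disjoint spectra (Lemma~\ref{LEMSFUNCTIONS}) and runs a covering/Borel--Cantelli argument (Proposition~\ref{PROPPREVALENCE}). Your self-identified ``main obstacle'' is in fact not one: since $\mathcal H^{1-\beta p}(D_\alpha)=+\infty$ (with $1/\alpha=1-\beta p$), Frostman's lemma produces a measure $\sigma$ on $F_\beta$ at the \emph{critical} exponent $s_0=1-\beta p$ itself; every piece $\{\beta_f\ge\beta+1/k\}$ then has Hausdorff dimension strictly below $s_0$, hence is $\mathcal H^{s_0}$-null, hence $\sigma$-null, and $\sigma(\{\beta_f>\beta\})=0$ outright with no limiting procedure. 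Choosing exponent $s<1-\beta p$ manufactures a spurious difficulty.

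The genuine gap lies in the passage from countably many $\beta$'s to all $\beta\in[0,1/p]$. Your Fubini argument gives, for each fixed $\beta$, that $\dim_{\mathcal H}E(\beta,f+tg)=1-\beta p$ for a.e.\ $t$; intersecting over a countable dense set is fine, but the vague appeal to ``monotonicity of $\mathcal E(\beta',\cdot)$'' and ``uniformity of the construction of $F_\beta$'' does not extend this to all $\beta$, since the level sets $E(\beta,\cdot)$ are not monotone in $\beta$ and nothing in your argument excludes the divergence index of $f+tg$ from avoiding a fixed irrational value. The paper resolves this differently: its covering argument delivers, for a.e.\ $c\in\RR^s$, the lower bound $\limsup_n|S_n(f+c\cdot g)(x)|/n^\beta=+\infty$ for \emph{every} $x\in D_\alpha$ (not merely $\sigma$-a.e.), and this is precisely what the disjoint spectra and the high dimension $s>4/\veps$ buy --- the total $\RR^s$-volume of bad $c$'s over the roughly $2^{2j}$ cubes $O_{l,j}$ is bounded by a constant times $2^{2j-\veps sj/2}$, which is summable in $j$ only when $\veps s>4$. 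This pointwise-in-$x$ conclusion survives the nesting $D_\alpha\subset\bigcap_k D_{\alpha_{\phi(k)}}$ and hence extends from a dense sequence of $\alpha$'s to all $\alpha$. Your $\sigma$-a.e.\ conclusion does not survive this nesting (the Frostman measure on $D_{\alpha_k}$ may well give zero mass to $D_\alpha\subset D_{\alpha_k}$), and it is not clear that any one-dimensional probe can close this gap.
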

Thus, almost every function in $L^p(\TT)$ is multifractal with respect to the
summation of its Fourier series.

\medskip
{\bf Question 3: can we say more on $\mathcal C(\TT)$?}
Theorem \ref{THMBHCT} implies that there exists a residual subset
$A\subset\mathcal C(\TT)$ such that, if $f\in A $ and if 
$\beta<1$, one can find a set $E\subset\TT$ with Hausdorff dimension 1 such that
\begin{eqnarray}\label{EQCT1}
\limsup_{n\to +\infty} \frac{|S_nf(x)|}{(\log n)^\beta}=+\infty\textrm{ for any }x\in E. 
\end{eqnarray}
On the other hand, we know that, for any fixed $f\in\mathcal C(\TT)$, $\|S_n
f\|_\infty$ is negligible compared to $\log n$ 
and that, conversely, given any sequence $(\delta_n)_{n\ge 2}$ of positive real numbers 
going to zero,
we can find $f\in\mathcal C(\TT)$ such that 
$$\limsup_{n\to +\infty} \frac{|S_nf(0)|}{\delta_n\log n}=+\infty.$$
These statements can be found for example in \cite{Zyg}. It seems then natural to ask whereas this property 
can be ensured in a set with Hausdorff dimension equal to 1
( (\ref{EQCT1}) meaning that this is true when $\delta_n=(\log n)^{\beta-1}$, 
$0<\beta<1$). 
This is indeed true.
\begin{theorem}\label{THMCT}
Let $(\delta_n)_{n\ge 2}$ be a sequence of positive real numbers going to zero. 
For quasi-all functions $f\in\mathcal C(\TT)$,
there exists $E\subset\TT$ with Hausdorff dimension 1 such that, for any $x\in E$, 
$$\limsup_{n\to +\infty} \frac{|S_nf(x)|}{\delta_n\log n}=+\infty.$$
\end{theorem}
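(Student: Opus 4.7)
The plan is to prove Theorem~\ref{THMCT} via the Baire category theorem, in the spirit of Theorem~\ref{THMBHCT} of \cite{BH11}, with extra care needed because the sequence $(\delta_n)$ may tend to zero arbitrarily slowly. We aim to construct a dense $G_\delta$ set $\mathcal{R} \subset \mathcal{C}(\TT)$ such that every $f \in \mathcal{R}$ admits a set $E \subset \TT$ with $\dim_\mathcal{H}(E) = 1$ on which $\limsup_n |S_n f(x)|/(\delta_n \log n) = +\infty$.

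The core ingredient is a quantitative building-block lemma: given $\eta > 0$, $M > 0$, and a finite family $\{y_1, \dots, y_K\} \subset \TT$, there exist an arbitrarily large integer $n$ and a trigonometric polynomial $P$ with $\|P\|_\infty \le \eta$ such that $|S_n P(y_j)| \ge M \delta_n \log n$ for every $j$. The starting point is the classical bound $\|D_n\|_1 \asymp \log n$, yielding polynomials $Q_n$ with $\|Q_n\|_\infty \le 1$ and $|S_n Q_n(0)| \ge c \log n$. By superposing suitably signed translates of the $Q_n$ (e.g.\ via a Rudin--Shapiro or random sign construction) and carefully estimating the cross-contributions via the Dirichlet kernel, both the sup-norm bound and the pointwise peak bound can be ensured. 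The crucial point is that $\delta_n \to 0$ lets us pick $n$ so large that $M \delta_n \log n$ is much smaller than $\log n$, which provides the slack needed to absorb any loss factor arising from the superposition.

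Using the lemma, for integers $k, N \ge 1$ let $V_{k,N}$ be the set of $f \in \mathcal{C}(\TT)$ for which there exist $n \ge N$, $\ell \ge N$, and $F \subset \{j/\ell : 0 \le j < \ell\}$ with $|F| \ge \ell^{1-1/k}$ and $|S_n f(x)| \ge N \delta_n \log n$ for all $x \in F$. Openness follows from the continuous dependence of $S_n f$ on $f$. Density is obtained from the lemma: for fixed $f \in \mathcal{C}(\TT)$ and $\varepsilon > 0$, use that $\|S_n f\|_\infty = o(\log n)$ to pick $n$ with this quantity $\le \varepsilon \log n / 4$, then apply the lemma with $\eta = \varepsilon/2$ and $K = \lceil \ell^{1-1/k} \rceil$ to produce a perturbation placing $f$ into $V_{k,N}$. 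Set $\mathcal{R} = \bigcap_{k,N} V_{k,N}$, a dense $G_\delta$. For $f \in \mathcal{R}$, extract for each $k$ witnessing data $(n_{k,N}, \ell_{k,N}, F_{k,N})_N$ and form $E_k = \limsup_N F_{k,N}$ and $E = \bigcup_k E_k$. Any $x \in E_k$ automatically makes the limsup infinite, and since $F_{k,N}$ is a $\ell_{k,N}^{-1/k}$-proportion of the $1/\ell_{k,N}$-equidistributed net, standard ubiquity and mass-transference tools for limsup sets yield $\dim_\mathcal{H}(E_k) \ge 1 - 1/k$, whence $\dim_\mathcal{H}(E) = 1$.

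The main obstacle is the building-block lemma: producing a single polynomial of small sup-norm whose $n$-th Fourier partial sum is simultaneously large at many prescribed points is delicate, because naive superposition of translates either blows up $\|P\|_\infty$ or drowns the Dirichlet peak in cross-contributions. The escape route is that $\delta_n$ can be made arbitrarily small by taking $n$ large, so that the target $M \delta_n \log n$ is negligible compared with the natural scale $\log n$; this creates enough slack for a controlled loss factor during the superposition. A secondary technical step is the Hausdorff dimension computation for $E_k$, but this is by now a standard application of ubiquity techniques for uniformly distributed point configurations.
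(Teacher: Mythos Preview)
Your proposal identifies the right architecture, but the heart of the matter is the building-block lemma you yourself flag as ``the main obstacle'', and the sketch you give does not close it. If $P=\sum_{j=1}^K \epsilon_j Q_n(\cdot-y_j)$ with $\|Q_n\|_\infty\le 1$ and $|S_nQ_n(0)|\ge c\log n$, then $S_nP(y_i)=\epsilon_i S_nQ_n(0)+\sum_{j\ne i}\epsilon_j S_nQ_n(y_i-y_j)$, and each cross-term may itself be of order $\log n$. Random or Rudin--Shapiro signs help with $\|P\|_\infty$, not with simultaneous lower bounds at all $K$ targets; the slack coming from $\delta_n\to 0$ only lowers the \emph{required output} $M\delta_n\log n$ and does nothing to stop the off-diagonal sum from annihilating the diagonal at some $y_i$. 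A second, independent gap lies in the dimension step: your $F_{k,N}$ are finite sets, so $E_k=\limsup_N F_{k,N}$ as written carries no dimension information, and even after thickening, the bare hypothesis $|F|\ge\ell^{1-1/k}$ allows $F$ to cluster in an arc of length $\ell^{-1/k}$, so ubiquity/mass-transference is inapplicable.

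The paper solves both problems at once by abandoning superposition. It constructs (Lemma~\ref{LEMHOLO}) a holomorphic function $f(z)=\frac1k\sum_j\frac{1+\veps}{1+\veps-\overline{z_j}z}$, a sum of Poisson-type kernels at the $k$-th roots of unity, with $\Re e f>0$ on $\overline{\DD}$ and $|f|\ge C_2\omega$ on the arcs $J_k^\omega=\bigcup_j[j/k-1/(2\omega k),\,j/k+1/(2\omega k)]$. Since $\Re e f>0$ one may set $g=\log f$ and then $P_n=\frac{2}{\pi}e_n\,\Im m(\sigma_n g)$; a quantitative F\'ejer estimate gives $\|P_n\|_\infty\le 1$ and $|S_nP_n(x)|\ge c\log\omega_n$ for \emph{every} $x\in J_{k_n}^{\omega_n}$ (Lemma~\ref{LEMCLECT}). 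Choosing $\omega_n=\exp(4\pi\veps_n\log n)$ turns $\log\omega_n$ into $\veps_n\log n$ with $\veps_n\to 0$ at our disposal while keeping $\omega_n=o(k_n^\alpha)$ for every $\alpha>0$, so $\limsup_j J_{k_j}^{\omega_j}$ has Hausdorff dimension $1$ directly by the mass transference principle on equispaced arcs. The passage through $\log f$ is precisely the device that yields large partial sums simultaneously on a full union of arcs with no cross-term loss, and working with arcs rather than points makes the dimension computation immediate.
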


The same result also holds in a prevalent subset of $\mathcal C(\TT)$.

\begin{theorem}\label{THMCTPREVALENT}
Let $(\delta_n)_{n\ge 2}$ be a sequence of positive real numbers going to zero. 
For almost every function $f\in\mathcal C(\TT)$,
there exists $E\subset\TT$ with Hausdorff dimension 1 such that, for any $x\in E$, 
$$\limsup_{n\to +\infty} \frac{|S_nf(x)|}{\delta_n\log n}=+\infty.$$
\end{theorem}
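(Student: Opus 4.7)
The strategy is to exhibit a compactly supported probability measure $\mu$ on $\mathcal C(\TT)$ that is transverse to the set $B$ of functions for which the conclusion fails. A natural candidate is a one-dimensional probe: fix some $h \in \mathcal C(\TT)$ in the residual set provided by Theorem~\ref{THMCT}, and let $\mu$ be the push-forward on $\mathcal C(\TT)$ of the uniform measure on $[-1,1]$ under the continuous map $t\mapsto th$. The support of $\mu$ is the compact arc $\{th : t \in [-1,1]\}$, so $\mu$ is a legitimate compactly supported probability on $\mathcal C(\TT)$. I need to show that for every $f\in\mathcal C(\TT)$, $f+Th$ satisfies the conclusion for Lebesgue-a.e.\ $T\in[-1,1]$.

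The key input is a refined reading of the proof of Theorem~\ref{THMCT}: the divergence set produced there can be arranged to be of the explicit form $E_h = \limsup_k \Lambda_k$, where $(\Lambda_k)$ is a sequence of well-distributed finite subsets of $\TT$ and $(n_k)$ a sequence of integers, such that for some constants $c_k \to +\infty$,
\[
\frac{|S_{n_k}h(x)|}{\delta_{n_k}\log n_k}\geq c_k\qquad\text{for every }x\in\Lambda_k.
\]
Moreover, the mass-transference / ubiquity flavor of the Baire construction allows us to arrange $\mathcal H^s(E_h)>0$ for every $s<1$. The first part of the proof is devoted to isolating these two facts from the (suitably rewritten) construction underlying Theorem~\ref{THMCT}.

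Now fix $f\in\mathcal C(\TT)$. Wherever $S_nh(x)\neq 0$ one has the factorization
\[
S_n(f+th)(x)=S_nh(x)(t-t_{n,x}),\qquad t_{n,x}=-\frac{S_nf(x)}{S_nh(x)},
\]
so for $x\in\Lambda_k$,
\[
\frac{|S_{n_k}(f+th)(x)|}{\delta_{n_k}\log n_k}\geq c_k\,|t-t_{n_k,x}|.
\]
Set $T_x=\{t\in[-1,1]:\limsup_{k,\,x\in\Lambda_k} c_k|t-t_{n_k,x}|=+\infty\}$. If $t\in T_x$ for $x\in E_h$, then $x\in E_{t,f}:=\{y:\limsup_n|S_n(f+th)(y)|/(\delta_n\log n)=+\infty\}$. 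Moreover $[-1,1]\setminus T_x$ has Lebesgue measure $0$ for each $x\in E_h$: it sits inside $\bigcup_{M,N}\bigcap_{k\geq N,\,x\in\Lambda_k}[t_{n_k,x}-M/c_k,\,t_{n_k,x}+M/c_k]$, and each inner intersection is contained in intervals of length $2M/c_k\to 0$, hence of Lebesgue measure $0$. Applying Fubini to the finite product Radon measure $\mathrm dt\otimes\mathcal H^s\!\restriction_{E_h}$ (choose $s<1$ so that $\mathcal H^s(E_h)$ is finite, after a harmless restriction) on the Borel set $B'=\{(t,x)\in[-1,1]\times E_h:t\notin T_x\}$, whose $x$-slices are all Lebesgue-null, the product measure of $B'$ vanishes, so for Lebesgue-a.e.\ $t$ the slice $B'^t=\{x\in E_h:t\notin T_x\}$ has $\mathcal H^s$-measure zero. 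For such $t$, $E_{t,f}$ contains $E_h\setminus B'^t$ and therefore has positive $\mathcal H^s$-measure, so $\dim_{\mathcal H}E_{t,f}\geq s$. Intersecting over a countable sequence $s\nearrow 1$ yields $\dim_{\mathcal H}E_{t,f}=1$ for Lebesgue-a.e.\ $t$, hence $\mu(\{g:f+g\in B\})=0$ for every $f$, establishing that $B$ is Haar-null.

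The main obstacle is the refinement announced in the second paragraph: one must revisit the construction underlying Theorem~\ref{THMCT} and extract the explicit $\limsup$ structure $E_h=\limsup_k\Lambda_k$ together with the $\mathcal H^s$-regularity for every $s<1$. The $\mathcal H^s$-regularity should follow from a ubiquity / mass-transference argument for the well-distributed grids $\Lambda_k$, but verifying that the specific $\Lambda_k$ built at the rate $\delta_n\log n$ meet the ubiquitous condition needs some care. A secondary (but routine) point is the justification of Fubini for the product Radon measure $\mathrm dt\otimes\mathcal H^s\!\restriction_{E_h}$, which only requires the $\sigma$-finiteness of $\mathcal H^s$ on $E_h$ and the Borel measurability of $B'$.
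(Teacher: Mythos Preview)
Your plan is workable in principle but vastly more complicated than necessary, because you overlook a structural fact that makes the proof almost trivial.

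The paper's argument is this: let $A$ be the set of $f\in\mathcal C(\TT)$ whose divergence set $E_f=\{x:\limsup_n|S_nf(x)|/(\delta_n\log n)=+\infty\}$ has Hausdorff dimension strictly less than $1$. The key observation is that $A$ is a \emph{vector subspace} of $\mathcal C(\TT)$. Indeed, from $|S_n(f_1-f_2)(x)|\leq|S_nf_1(x)|+|S_nf_2(x)|$ one gets $E_{f_1-f_2}\subset E_{f_1}\cup E_{f_2}$, hence $\dim_{\mathcal H}E_{f_1-f_2}\leq\max(\dim_{\mathcal H}E_{f_1},\dim_{\mathcal H}E_{f_2})<1$; closure under scalars is immediate. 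Now pick any $f_0\notin A$, which exists by Theorem~\ref{THMCT}. For any $g\in\mathcal C(\TT)$, if $t_1f_0\in g+A$ and $t_2f_0\in g+A$ then $(t_1-t_2)f_0\in A$, forcing $t_1=t_2$. Thus the line $\textrm{span}(f_0)$ meets each translate $g+A$ in at most one point, and Lebesgue measure on this line is transverse to $A$. (This is really just the general fact that a proper linear subspace of a complete metric vector space is Haar-null.)

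Your route, by contrast, requires reopening the construction behind Theorem~\ref{THMCT} to extract an explicit $\limsup$ structure with $\mathcal H^s$-regularity for every $s<1$, and then running a Fubini argument against $\mathrm dt\otimes\mathcal H^s$. You correctly flag this extraction as the ``main obstacle,'' and it would indeed take real work (for instance, in the actual construction the divergence at a point $x$ of the $p$-th block occurs at one of \emph{two} integers $n\in\{j_p,2j_p\}$ depending on $x$, so your single sequence $(n_k)$ needs adjustment; your $\Lambda_k$ are unions of intervals rather than finite sets; and the ubiquity verification must be carried out). None of this is needed: the subspace property replaces your entire Fubini machinery by the observation that each translate of $A$ meets the probe line in at most one point.
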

The proof of Theorems \ref{THMCT} and \ref{THMCTPREVALENT} are proposed in Section \ref{SECCT}.

\section{Multifractal analysis of the divergence of Fourier series in $L^1(\TT)$}
\label{SECL1}

We first recall some basic facts on Fourier series and Fourier transforms in $L^p$. 
Let $\xi\in\mathbb R$ and $e_\xi:t\mapsto e^{2\pi i \xi t}$. 
The Fourier transform of $f\in L^1(\RR)$ is the continuous function 
$$\hat f:\xi\mapsto \int_\mathbb R f(x)\bar{e_\xi}(x)dx.$$ 
The operator makes also
sense in the space $L^p(\RR)$ when $1\le p<+\infty$. In that case, $\hat f\in
L^q(\RR)$ where $\frac1p+\frac1q=1$. In $L^p(\RR)$ we can define the band-limiting 
operator $S_n$ by 
$$\widehat{S_n f}=\mathbf 1_{[-n,n]}\hat f.$$ It is well known that, on
$L^p(\RR)$, the projections $(S_n)_{n\geq 0}$ are uniformly bounded; this is 
the Riesz theorem. This is not the case on $L^1(\RR)$. However, there exists some 
absolute constant $C>0$ such that, for any $n\geq 2$ and any $f\in L^1(\RR)$, 
$$\|S_n f\|_1\leq C\log n \|f\|_1.$$

A function $g\in L^1(\TT)$ 
is identified to a $1$-periodic function on $\RR$. Its Fourier transform is the 
tempered distribution 
$$\hat g=\sum_{k\in\ZZ} \pss{g}{e_k}\delta_k,$$
where $\pss{g}{e_k}=\int_{\TT}g(t)\bar{e_k}(t)dt$ are the Fourier coefficients of $g$  
and $\delta_k$ denotes the Dirac
mass at point $k$. 
If $g\in L^1(\TT)$, the band limiting operator corresponds to taking the partial sum of 
the Fourier series,
$$S_n g:t\mapsto \sum_{k=-n}^n \pss g{e_k}e_k(t).$$
We can also write $S_ng=D_n*g$ where 
$$D_n(t)=\sum_{k=-n}^n e_k(t)=\frac{\sin\big(\pi(2n+1)t\big)}{\sin(\pi t)}$$
is the Dirichlet kernel and the Riesz theorem always occurs in this context.

Let us also recall the definition of  $\sigma_n g$, the $n$-th F\'ejer sum of $g$, namely
$$\sigma_n(g)=\frac1n\big(S_0g+\dots+S_{n-1}g\big).$$

\medskip
We write $\mathcal E_n(\TT):=S_n(L^1(\TT))$ the set of trigonometric polynomials of 
degree less
than $n$ and $\mathcal E_n(\RR):=S_n(L^1(\RR))$. The classical
Nikolsky inequality (see for example \cite{Nyk}) says that if $P\in\mathcal E_n(\TT)$ 
or $P\in\mathcal E_n(\RR)$ and $1\leq p\leq q\leq \infty$, then 
$$\|P\|_q\leq n^{\frac1p-\frac1q}\|P\|_p.$$

Our first lemma will be helpful to control a function which is locally a Dirichlet 
kernel.

\begin{lemma}\label{LEML1-1}
There exists a constant $A>0$ such that, for any $N\geq 2$, for any measurable function  
$n:\TT\to\{1,\dots,N\}$, for any $t\in\TT$, then
$$\int_{\TT}|D_{n(x)}(x-t)|dx\leq A\log N.$$
\end{lemma}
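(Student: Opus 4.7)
The plan is to exploit the pointwise upper bound for the Dirichlet kernel, which will suffice precisely because $n(x) \leq N$. I would start by recording the two standard estimates $|D_n(u)| \leq 2n+1$ (immediate from the defining sum) and $|D_n(u)| \leq 1/|\sin(\pi u)|$ (from the closed form given in the introduction). Combined with the elementary inequality $|\sin(\pi u)| \geq 2\, d(u,0)$, valid on $\TT$ identified with $[-1/2,1/2]$, these give
$$|D_{n(x)}(x-t)| \leq C \min\!\bigl(N,\ d(x,t)^{-1}\bigr)$$
for every measurable $n:\TT\to\{1,\dots,N\}$ and every $x,t\in\TT$. The crucial point is that the right-hand side depends on the parameters only through $N$ and $d(x,t)$; the actual choice of $n(x)$ has disappeared at this step, simply because the pointwise size of $D_{n(x)}$ has been bounded using the worst case $n(x)=N$ in the first slot and something $n(x)$-independent in the second.

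Next I would integrate this pointwise inequality in $x$. By translation invariance of the distance on $\TT$, the bound reduces to estimating $\int_\TT \min(N, 1/d(x,t))\,dx = 2\int_0^{1/2}\min(N,1/u)\,du$, which I would split at $u=1/N$. The contribution from $[0,1/N]$ is at most $N\cdot (1/N)=1$, and the contribution from $[1/N,1/2]$ is $\int_{1/N}^{1/2} u^{-1}\,du = \log(N/2)$. Adding these two pieces gives an upper bound of order $\log N$, yielding the claimed inequality with a universal constant $A$.

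There is really no obstacle here: the lemma is a robustness statement for the classical $\|D_n\|_1=O(\log n)$, and it holds precisely because that classical bound itself relies only on the pointwise size of the kernel. The only mild subtleties --- uniformity in $t$ and freedom in the choice of the measurable function $n(\cdot)$ --- are dispatched respectively by the translation invariance of $d$ and by passing to the upper bound $n(x)\leq N$ \emph{before} integrating. A minor bookkeeping point is to verify $|\sin(\pi u)|\geq 2\,d(u,0)$ on the fundamental domain, which is standard; adjusting the constant $A$ absorbs any numerical looseness.
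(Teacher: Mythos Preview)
Your proof is correct and follows essentially the same route as the paper: both arguments use the pointwise bound $|D_{n(x)}(x-t)|\leq C\min(N,\,d(x,t)^{-1})$ (valid because $n(x)\leq N$) and then split the integral at the scale $1/N$. The only cosmetic difference is that you spell out the inequality $|\sin(\pi u)|\geq 2|u|$ explicitly, whereas the paper simply records the two bounds $|D_k(u)|\leq CN$ and $|D_k(u)|\leq C/|u|$ and proceeds.
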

\begin{proof}
It is obvious from the above expression of $D_n$ that, if $k\leq N$ and if 
$u\in [-1/2,1/2]$, 
$$|D_k(u)|\leq\left\{
\begin{array}{c}
CN\\
\frac{C}{|u|}
\end{array}\right.$$
for some absolute constant $C>0$. We then split the integral into two parts:
$$\int_{|x-t|\leq1/N}|D_{n(x)}(x-t)|dx\leq 2CN\frac1N$$
and
$$\int_{1/N<|x-t|\le1/2}|D_{n(x)}(x-t)|dx\leq C\int_{1/N<|x-t|\le1/2}\frac{dx}{|x-t|}
\leq 2C\log N.$$
\end{proof}
Writing $S_{n(x)}f(x)=(f\star D_{n(x)})(x)$ and using Fubini's theorem, it is 
straightforward to deduce the following
inequality on partial sums of Fourier series of $L^1$-functions.
\begin{lemma}\label{LEML1-2}
There exists a constant $A>0$ such that, for any $N\geq 2$, for any measurable
function $n:\TT\to\{1,\dots,N\}$, for any $f\in L^1(\TT)$,
then 
$$\int_{\TT}|S_{n(x)}f(x)|dx\leq A\log N\,\|f\|_1.$$
\end{lemma}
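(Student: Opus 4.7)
The plan is exactly the one hinted at in the paragraph preceding the statement: express $S_{n(x)}f(x)$ as a convolution against the Dirichlet kernel, apply Tonelli to swap the order of integration, and then invoke Lemma \ref{LEML1-1} to bound the inner integral uniformly. Concretely, I would start from
$$S_{n(x)}f(x) = (f \star D_{n(x)})(x) = \int_\TT f(t)\, D_{n(x)}(x-t)\, dt,$$
so that
$$|S_{n(x)}f(x)| \leq \int_\TT |f(t)|\, |D_{n(x)}(x-t)|\, dt.$$

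Next I would integrate this inequality against $dx$ over $\TT$. Since the integrand is nonnegative, Tonelli's theorem lets me exchange the order of integration, producing
$$\int_\TT |S_{n(x)}f(x)|\, dx \leq \int_\TT |f(t)| \left( \int_\TT |D_{n(x)}(x-t)|\, dx \right) dt.$$
For each fixed $t \in \TT$, the map $x \mapsto n(x)$ still takes values in $\{1,\dots,N\}$ and is measurable, so Lemma \ref{LEML1-1} applies and gives
$$\int_\TT |D_{n(x)}(x-t)|\, dx \leq A \log N,$$
with the constant $A$ furnished by that lemma. Substituting back yields
$$\int_\TT |S_{n(x)}f(x)|\, dx \leq A \log N \int_\TT |f(t)|\, dt = A \log N\, \|f\|_1,$$
which is exactly the desired inequality (with the same constant $A$).

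There is essentially no obstacle here; the only thing to check before invoking Tonelli is that $(x,t) \mapsto f(t)\, D_{n(x)}(x-t)$ is measurable on $\TT \times \TT$. This follows by writing
$$D_{n(x)}(x-t) = \sum_{k=1}^{N} \mathbf 1_{\{n=k\}}(x)\, D_k(x-t),$$
which is a finite sum of products of measurable functions (each $D_k$ being continuous and each level set $\{n=k\}$ measurable). The key conceptual point is that Lemma \ref{LEML1-1} already absorbs all the difficulty: it produces a bound on the $L^1$-norm in $x$ of the "variable" kernel that is independent of the translation parameter $t$, so the convolution estimate follows mechanically.
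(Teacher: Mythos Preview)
Your proof is correct and follows exactly the approach the paper indicates: write $S_{n(x)}f(x)=(f\star D_{n(x)})(x)$, apply Fubini/Tonelli, and invoke Lemma~\ref{LEML1-1}. The paper does not spell this out further, so your added measurability check is a welcome bit of extra care.
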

We are now ready to prove the following weak version of the maximal inequality of 
Carleson and Hunt, on $L^1(\TT)$.
\begin{corollary}\label{CORL1-1}
Let $\alpha>0$. There exists $C:=C_\alpha>0$ such that, for any $f\in L^1(\TT)$, 
$$\int_{\TT}\sup_{n\geq 2}\frac{|S_n f(x)|}{(\log n)^{1+\alpha}}dx\leq C\,\|f\|_1.$$
\end{corollary}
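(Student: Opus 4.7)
The plan is to linearize the supremum by splitting $\{n\geq 2\}$ into a super-exponential union of blocks so that the $O(\log N)$ loss coming from Lemma \ref{LEML1-2} becomes summable. The natural dyadic blocks $[2^k,2^{k+1})$ would lead to $\sum (k+1)/k^{1+\alpha}\sim \sum k^{-\alpha}$, which converges only for $\alpha>1$. This forces us to grow the blocks faster, and a clean choice is $N_k:=2^{2^k}$ for $k\geq 0$, so that $N_0=2$ and $\log N_k=2^k\log 2$.

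First I would write, using that $\sup_k a_k\leq\sum_k a_k$ for nonnegative $a_k$,
\begin{equation*}
\sup_{n\geq 2}\frac{|S_nf(x)|}{(\log n)^{1+\alpha}}
=\sup_{k\geq 0}\sup_{N_k\leq n<N_{k+1}}\frac{|S_nf(x)|}{(\log n)^{1+\alpha}}
\leq \sum_{k\geq 0}\frac{M_k(x)}{(\log N_k)^{1+\alpha}},
\end{equation*}
where $M_k(x):=\sup_{N_k\leq n<N_{k+1}}|S_nf(x)|$. Since the sup is over finitely many measurable functions of $x$, $M_k$ is measurable, and by picking the smallest index achieving the maximum we can realize $M_k(x)=|S_{n_k(x)}f(x)|$ for some measurable $n_k:\TT\to\{N_k,\ldots,N_{k+1}-1\}$.

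Next, I would apply Lemma \ref{LEML1-2} to each $n_k$, viewed as a function with values in $\{1,\ldots,N_{k+1}-1\}$, to get
\begin{equation*}
\int_\TT M_k(x)\,dx
=\int_\TT |S_{n_k(x)}f(x)|\,dx
\leq A\log N_{k+1}\,\|f\|_1.
\end{equation*}
Summing and using $\log N_k=2^k\log 2$ then gives
\begin{equation*}
\int_\TT \sup_{n\geq 2}\frac{|S_nf(x)|}{(\log n)^{1+\alpha}}\,dx
\leq A\,\|f\|_1\sum_{k\geq 0}\frac{\log N_{k+1}}{(\log N_k)^{1+\alpha}}
=\frac{2A}{(\log 2)^\alpha}\,\|f\|_1\sum_{k\geq 0}2^{-k\alpha},
\end{equation*}
which is a convergent geometric series for every $\alpha>0$, yielding the constant $C_\alpha$.

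The only genuine subtlety is the choice of growth rate for $(N_k)$: one must grow fast enough that the penalty $\log N_{k+1}$ is negligible compared to $(\log N_k)^{1+\alpha}$ for arbitrary $\alpha>0$. The super-exponential choice $N_k=2^{2^k}$ is what turns the naive divergent series $\sum k^{-\alpha}$ into a convergent geometric one, and there is no actual obstacle beyond this bookkeeping, since the measurable-selection step is routine for sups over finite sets.
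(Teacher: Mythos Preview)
Your proof is correct and follows essentially the same strategy as the paper: both arguments use the super-exponential blocks $N_k=2^{2^k}$ together with Lemma \ref{LEML1-2}, so that the ratio $\log N_{k+1}/(\log N_k)^{1+\alpha}$ becomes a geometric series $\sum_k 2^{-k\alpha}$. The only cosmetic difference is that the paper first linearizes the supremum into a single measurable selector $n(x)$ and then partitions $\TT$ into the level sets $A_k=\{x:\,2^{2^k}\leq n(x)<2^{2^{k+1}}\}$, whereas you bound $\sup_k\leq\sum_k$ and use a separate selector $n_k$ on each block; the resulting estimates are identical.
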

\begin{proof}
Using the monotone convergence theorem, we first observe that
 it is sufficient to prove that, for any $N\geq 2$,
\begin{eqnarray}\label{EQCORL1-1}
\int_{\TT}\sup_{2\leq n\leq N}\frac{|S_n f(x)|}{(\log n)^{1+\alpha}}dx\leq C\,\|f\|_1
\end{eqnarray}
where, of course, $C$ does not depend on $N$. Now, we take a measurable
function $n:\TT\to\NN\backslash\{0,1\}$ not necessarily bounded, 
and observe that
(\ref{EQCORL1-1}) will be proved if we are able to show that
$$\int_{\TT}\frac{|S_{n(x)}f(x)|}{(\log n(x))^{1+\alpha}}dx\leq C\,\|f\|_1$$
for some constant $C$ independent of the function $n$. 
If $k\geq 0$, let 
$$A_k=\{x\in\TT;\ 2^{2^k}\leq n(x)< 2^{2^{k+1}}\}.$$ 
Lemma \ref{LEML1-2} ensures that
\begin{eqnarray*}
\int_{\TT}\frac{|S_{n(x)}f(x)|}{(\log n(x))^{1+\alpha}}dx&=&\sum_{k\geq
  0}\int_{A_k}\frac{|S_{n(x)}f(x)|}{(\log n(x))^{1+\alpha}}dx\\
&\leq&\sum_{k\geq 0}\frac1{\left( 2^k\log 2\right)^{1+\alpha}}
\int_{A_k}|S_{n(x)}f(x)|dx\\
&\leq&\sum_{k\geq 0}C\frac{2^{k+1}\log2}{2^{k(1+\alpha)}(\log2)^{1+\alpha}}\,\|f\|_1\\
&=&C_\alpha\,\|f\|_1.
\end{eqnarray*}
\end{proof}

The following lemma is inspired by Aubry's paper. It means that, as soon as a 
trigonometric polynomial is large at some point $a\in\TT$,
it is also large in small intervals around $a$, with a rather good control of the 
$L^p$-norm.
\begin{lemma}\label{LEML1-3}
Let $p\geq 1$ and $\veps>0$. There exists $\delta>0$ such that, if $n$ is
large enough, if $P\in\mathcal E_n(\TT)$ and if $a\in\TT$ is such that 
$|P(a)|\ge\|P\|_p$, then, for any interval $I$ with center $a$ and with length 
$|I|\leq \frac 1n$,

$$\|P\|_{L^p(I)}\geq\delta |P(a)|\times|I|^{1/p}\times \left\{
\begin{array}{ll}
\displaystyle \frac{1}{(\log n)^{(1+\veps)/p}}&\textrm{provided }p>1\\
\displaystyle \frac{1}{(\log n)^{1+\veps}\log(1/|I|)}&\textrm{provided }p=1.
\end{array}
\right.$$
\end{lemma}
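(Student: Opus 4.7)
The plan is to localize $P$ via a smooth cutoff and analyze the resulting function via a Nikolsky-type argument after Fourier truncation.

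Normalize $a = 0$ and set $M = |P(0)|$, $h = |I|/2$. When $|I|$ is very small---say $|I| \leq 1/(C n^{1+1/p})$---Nikolsky's inequality $\|P\|_\infty \leq C n^{1/p}\|P\|_p \leq Cn^{1/p} M$ combined with Bernstein's inequality $\|P'\|_\infty \leq 2\pi n\|P\|_\infty$ gives $|P(x)| \geq M/2$ throughout $I$, yielding the clean bound $\|P\|_{L^p(I)} \geq (M/2)|I|^{1/p}$ with no logarithmic loss. This handles the small $|I|$ range, so we may henceforth assume $|I| \geq 1/(Cn^{1+1/p})$, and in particular $\log(1/|I|) \leq C'\log n$.

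Choose a bump $\phi \in C^\infty_c(\RR)$ of Gevrey class of order $s := 1 + \veps/2$, with $\phi(0) = 1$, $\text{supp}(\phi) \subset [-1,1]$, and Fourier decay $|\hat\phi(\xi)| \leq A e^{-c|\xi|^{1/s}}$ (such bumps exist for every $s > 1$). Set $\phi_h(t) := \phi(t/h)$ and $Q := \phi_h P$, viewed as a periodic function on $\TT$. Then $\text{supp}(Q) \subset I$, $Q(0) = M$, and $\|Q\|_p \leq \|P\|_{L^p(I)}$. Decompose $M = Q(0) = S_N Q(0) + (Q - S_N Q)(0)$ for $N$ to be chosen.

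For the main term, $S_N Q \in \mathcal{E}_N(\TT)$; Nikolsky gives $\|S_N Q\|_\infty \leq CN^{1/p}\|S_N Q\|_p$, and the Riesz theorem (for $p > 1$) or its $L^1$ analogue ($\|S_N Q\|_1 \leq A\log N\, \|Q\|_1$) yields $|S_N Q(0)| \leq CN^{1/p}\|P\|_{L^p(I)}$ for $p > 1$, and $|S_N Q(0)| \leq CN\log N\,\|P\|_{L^1(I)}$ for $p = 1$. For the tail, the identity $\hat Q(k) = \sum_{|j|\leq n}\hat P(j)\,\hat{\phi_h}(k-j)$ together with $|\hat P(j)| \leq \|P\|_1 \leq M$, the estimate $(2n+1)h \leq 2$, and the Gevrey decay $|\hat{\phi_h}(m)| = h\,|\hat\phi(mh)| \leq A h\, e^{-c|mh|^{1/s}}$ gives $|\hat Q(k)| \leq 2MA\, e^{-c(h|k|/2)^{1/s}}$ for $|k| > N \geq 2n$. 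Summing and comparing to an integral yields $\sum_{|k|>N}|\hat Q(k)| \leq C(s)\,M\,h^{-1/s}\,N^{(s-1)/s}\,e^{-c(hN/2)^{1/s}}$, and choosing $N := C_1(\log n)^s/h$ with $C_1$ large enough (using $\log(1/h) \leq C'\log n$) makes this tail at most $M/2$.

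Combining, for $p > 1$ we get $\|P\|_{L^p(I)} \geq \delta\,M/N^{1/p} \geq \delta\,|I|^{1/p}M/(\log n)^{s/p}$, which implies the stated bound since $s = 1 + \veps/2 < 1 + \veps$. For $p = 1$, we have $N\log N \leq 2N\log(1/h) \leq C(\log n)^s\log(1/|I|)/h$, so $\|P\|_{L^1(I)} \geq \delta\,M/(N\log N) \geq \delta\,|I|M/((\log n)^s\log(1/|I|))$, again implying the stated bound. The main obstacle is the tail estimate: the crude bound $\sum_{|j|\leq n}|\hat P(j)| \leq (2n+1)M$ loses a factor of $n$, which must be recovered from the Fourier decay of $\hat{\phi_h}$. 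A generic $C^\infty_c$ bump gives only polynomial decay, which would force $N$ polynomial in $n$ and cause a polynomial loss; the Gevrey class precisely allows $N$ to be only poly-logarithmic in $n$, hence the logarithmic (rather than polynomial) loss in the final bound.
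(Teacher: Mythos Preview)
Your proof is correct and follows essentially the same approach as the paper: localize by a Gevrey-class bump, truncate the Fourier side at $N\asymp |I|^{-1}(\log n)^{1+\veps}$, control the low-frequency part by Nikolsky together with the Riesz bound (with the extra $\log N$ factor when $p=1$), and kill the high-frequency tail via the sub-exponential Fourier decay of the bump. The only cosmetic difference is that you first dispose of the regime $|I|\lesssim n^{-1-1/p}$ by a direct Bernstein argument so that $h^{-1}$ is polynomially bounded in $n$; the paper avoids this preliminary step by the change of variables $\xi\mapsto \xi/|I|$ in the tail integral, which makes the estimate $\int_{|\xi|>N}|\hat w_I(\xi-j)|\,d\xi\le Cn^{-2}$ uniform in $|I|$.
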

\begin{rems}

- Such a point $a$ does exist because $P$ is continuous.\\
- In fact, we will only need the lemma in the case $p=1$, but we give the
general case for completeness.
\end{rems}
\begin{proof}[Proof of Lemma \ref{LEML1-3}]
Without loss of generality, we may assume that $a=0$. The idea is to localize $P$ 
around $0$, and to use Nikolsky inequality 
to estimate the $L^p$-norm knowing the $L^\infty$-norm. 
Let $\gamma\in(0,1)$ such that $\gamma(1+\veps)>1$. We introduce a function $w$ with 
support in $[-1,1]$
satisfying $0\leq w\leq 1$, $w(0)=1$ and for which there exist two strictly positive 
constants $D$ and $E$ such that
$$\forall \xi\in\RR,\quad |\hat w(\xi)|\leq De^{-E|\xi|^\gamma}.$$
It is a classical result in Fourier analysis that such a function does exist (see e.g. 
\cite[Lemma 6]{Aub06}). We then set $w_I(x)=w(x/|I|)$. 
We decompose $Pw_I$ as $f_1+f_2$ with $f_1=S_{N}Pw_I$
and $N=[|I|^{-1}(\log n)^{1+\veps}]$, the integer part of $|I|^{-1}(\log n)^{1+\veps}$. 
On the one hand, if $p>1$ we get
\begin{eqnarray*}
\|f_1\|_{\infty}&\leq&N^{1/p}\|f_1\|_p\textrm{ (Nikolsky inequality)}\\
&\leq&C_p |I|^{-1/p}(\log n)^{(1+\veps)/ p}\|Pw_I\|_p\textrm{ (Riesz theorem)}\\
&\leq&C_p |I|^{-1/p}(\log n)^{(1+\veps)/ p}\|P\|_{L^p(I)}.
\end{eqnarray*}
When $p=1$, we have to add the norm of the Riesz projection,
and we get
$$\|f_1\|_\infty\leq C_1 |I|^{-1}(\log n)^{1+\veps}\log(1/|I|)\|P\|_{L^1(I)}.$$
On the other hand, we may write
\begin{eqnarray*}
\hat f_2(\xi)&=&\mathbf 1_{\{|\xi|>N\}}(\xi)(\hat{P}\star 
\hat{w_I})(\xi)\\
&=&\sum_{j=-n}^n \mathbf 1_{\{|\xi|>N\}}(\xi)\hat P(j)\hat{w_I}(\xi-j).
\end{eqnarray*}
Now, if $n$ is large enough  and  $j\leq n$, we have
\begin{eqnarray*}
\int_{|\xi|>N}|\hat{w_I}(\xi-j)|d\xi&\leq&\int_{|\xi|>\frac12|I|^{-1}(\log n)^{1+\veps}}|
\hat{w_I}(\xi)|d\xi\\
&=&\int_{|\xi|>\frac12(\log n)^{1+\veps}}|\hat{w}(\xi)|d\xi.
\end{eqnarray*}
Observe that
$$\int_A^{+\infty}e^{-E\xi^\gamma}d\xi=\frac1\gamma\int_{A^{\gamma}}^{+\infty}
e^{-Et}t^{1/\gamma-1}dt\le Ce^{-(E/2)A^{\gamma}}.$$
It follows easily that
$$\int_{|\xi|>N}|\hat{w_I}(\xi-j)|d\xi\leq Cn^{-2}$$

provided $n$ is large enough. This implies
\begin{eqnarray*}
\|f_2\|_\infty\leq \|\hat{f}_2\|_1&\leq&Cn^{-2}\sum_{j=-n}^n |\hat{P}(j)|\\
&\leq&Cn^{-2}(2n+1)\|P\|_1\\
&\leq&Cn^{-2}(2n+1)\|P\|_p\\
&\leq&\frac12\|P\|_p
\end{eqnarray*}
provided $n$ is large enough. If we recall that $|P(0)|\ge\|P\|_p$, we get
$$\|f_1\|_\infty\geq |P(0)|-\|f_2\|_\infty\geq \frac12|P(0)|$$
and the result follows from the above estimates of $\|f_1\|_\infty$.
\end{proof}
We can now conclude by proving the following proposition (Proposition \ref{PROPL1}) and its corollary  on
the Hausdorff dimension of $E(\beta,f)$ (Corollary \ref{CORAUBRYL1}). Recall that it is all that we need to obtain Theorem \ref{THML1}
since the construction done in \cite{BH11} is always true when $p=1$ and shows that there exists a residual set of 
functions $f\in L^1(\TT)$ with
$\dim_{\mathcal H}(E(\beta,f))\geq 1-\beta$ for any $\beta\in[0,1]$.
\begin{proposition}\label{PROPL1}
Let $f\in L^1(\TT)$ and  $\tau:(0,+\infty)\to(0,+\infty)$ be an increasing function. Define 
$$E(\tau,f):=\left\{x\in\TT;\ \limsup_{n\to +\infty}\frac{ |S_n f(x)|}{\tau(n)}=+\infty\right\}.$$ 
If $\nu>3$ and if $\phi$ is a dimension function  satisfying $c_1s\le\phi(s)\le c_2\frac{s\tau(s^{-1})}{\log(s^{-1})^\nu}$,
then $$\mathcal H^{\phi}(E(\tau,f))=0.$$
\end{proposition}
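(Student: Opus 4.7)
The strategy is to imitate Aubry's proof of Theorem \ref{THMAUBRY} for $L^p$, $p>1$, substituting the two $L^1$ ingredients developed in this section: Corollary \ref{CORL1-1} takes the place of the Carleson--Hunt maximal inequality, and Lemma \ref{LEML1-3} at $p=1$ takes the place of Aubry's localization lemma. Set
\[
A_n^M = \{x \in \TT : |S_n f(x)| \ge M\tau(n)\}, \qquad E_M = \bigcap_N \bigcup_{n \ge N} A_n^M,
\]
so that $E(\tau,f) = \bigcap_M E_M$; it therefore suffices to show that $\mathcal H^\phi(E_M)=0$ for every $M$.

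Fix small parameters $\veps,\alpha>0$ (to be chosen eventually so that $\veps+\alpha<\nu-3$, which is possible as $\nu>3$), and let $G(x):=\sup_{n\ge 2}|S_nf(x)|/(\log n)^{1+\alpha}$. By Corollary \ref{CORL1-1}, $\|G\|_1\le C_\alpha\|f\|_1$. Given an integer $K$, for each $a\in\bigcup_{n\ge 2^K}A_n^M$ choose some $n(a)\ge 2^K$ with $a\in A_{n(a)}^M$, and let $J_a$ be the interval of length $1/n(a)$ centered at $a$. Applying Lemma \ref{LEML1-3} to $P=S_{n(a)}f$, using $\log(1/|J_a|)=\log n(a)$, gives
\[
\|S_{n(a)}f\|_{L^1(J_a)} \gtrsim \frac{M\,\tau(n(a))}{n(a)\,(\log n(a))^{2+\veps}}.
\]
On the other hand, the defining inequality $|S_{n(a)}f(x)|\le G(x)(\log n(a))^{1+\alpha}$ integrated over $J_a$ yields $\|S_{n(a)}f\|_{L^1(J_a)}\le(\log n(a))^{1+\alpha}\int_{J_a}G$, so
\[
\int_{J_a}G \;\gtrsim\; \frac{M\,\tau(n(a))}{n(a)\,(\log n(a))^{3+\veps+\alpha}}.
\]

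Now extract by Vitali a disjoint subfamily $\{J_{a_i}\}$ whose constant-factor dilates still cover $\bigcup_{n\ge 2^K}A_n^M$, and write $n_i=n(a_i)$. Since the $J_{a_i}$ are pairwise disjoint,
\[
\sum_i\frac{\tau(n_i)}{n_i\,(\log n_i)^{3+\veps+\alpha}} \;\lesssim\; \frac{1}{M}\sum_i\int_{J_{a_i}}G \;\le\; \frac{\|G\|_1}{M} \;\lesssim\; \frac{\|f\|_1}{M}.
\]
Because $n_i\ge 2^K$ implies $\log n_i\gtrsim K$, the total $\phi$-mass of the cover is
\[
\sum_i\phi(|J_{a_i}|) \;\lesssim\; \sum_i\frac{\tau(n_i)}{n_i\,(\log n_i)^\nu} \;\le\; \frac{1}{K^{\,\nu-3-\veps-\alpha}}\sum_i\frac{\tau(n_i)}{n_i\,(\log n_i)^{3+\veps+\alpha}} \;\lesssim\; \frac{\|f\|_1}{M\,K^{\,\nu-3-\veps-\alpha}}.
\]
Since $\nu-3-\veps-\alpha>0$, this tends to $0$ as $K\to+\infty$, and as the mesh of the cover is $\lesssim 2^{-K}\to 0$, we conclude $\mathcal H^\phi(E_M)=0$.

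The main technical obstacle is securing the hypothesis $|P(a)|\ge\|P\|_1$ of Lemma \ref{LEML1-3}. In $L^1$ one only has $\|S_nf\|_1\le C\log n\cdot\|f\|_1$, which can exceed $M\tau(n)$ when $\tau$ grows slowly compared with $\log n$. I would deal with this in the standard way by splitting $f=g+h$ with $g$ a smooth trigonometric polynomial and $\|h\|_1$ arbitrarily small: for $n$ much larger than the degree of $g$, the pointwise bound $\|S_ng\|_\infty=O(1)$ forces the divergence giving $a\in A_n^M(f)$ to come entirely from $S_nh(a)$, and the hypothesis is then restored by taking $\|h\|_1$ sufficiently small compared with $M\tau(2^K)/\log 2^K$. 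The rest of the bookkeeping (Vitali in dimension one, the doubling of $\phi$ under a constant dilation) is routine.
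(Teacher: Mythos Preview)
Your argument follows the paper's route: a Vitali covering of $E_M$ by intervals of length $1/n_x$, Lemma \ref{LEML1-3} for a lower $L^1$-estimate on each, and Corollary \ref{CORL1-1} to sum. The paper's bookkeeping is slightly leaner---it sets $\veps=\nu-3$, obtains $\sum_i\phi(5|I_i|)\le C\|f\|_1/(\delta M)$ directly, and then sends $M\to\infty$ via $E(\tau,f)=\bigcap_M E_M$, rather than introducing your extra scale parameter $K$---but both versions are correct.

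Where you depart from the paper is in treating the hypothesis $|P(a)|\ge\|P\|_1$ of Lemma \ref{LEML1-3} as a genuine obstacle. It is not: the two-sided constraint $c_1 s\le\phi(s)\le c_2\,s\,\tau(s^{-1})/(\log s^{-1})^\nu$ forces $\tau(n)\ge(c_1/c_2)(\log n)^\nu$, so that
\[
|S_{n_x}f(x)|\;\ge\; M\tau(n_x)\;\gg\; C(\log n_x)\|f\|_1\;\ge\;\|S_{n_x}f\|_1
\]
automatically once $n_x$ is large. The paper disposes of this in one line (``the hypothesis on the function $\tau$ implies that, if $n_x$ is large enough, $\|S_{n_x}f\|_1\le|S_{n_x}f(x)|$''). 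Your density splitting $f=g+h$ is therefore unnecessary---and, as you have written it, it would itself rely on $\inf_{n\ge 2^K}\tau(n)/\log n>0$, which is exactly what the observation above provides.
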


\begin{proof}
Let $M>0$ and $\veps=\nu-3$. Define
$$E_M(\tau,f)=\left\{x\in\TT;\ \limsup_{n\to +\infty}\frac{ |S_nf(x)|}{\tau(n)}>M\right\}.$$ 
If $x\in E_M(\tau,f)$, one can find $n_x$ as large as we want such that $|S_{n_x}f(x)|\geq M\tau(n_x)$.
Set $I_x=\left[x-\frac1{2n_x},x+\frac1{2n_x}\right]$ and observe that $\|S_{n_x}f\|_1\leq C(\log n_x)$. The hypothesis on the function $\tau$ implies that, if $n_x$ is large enough, 
$\|S_{n_x}f\|_1\le|S_{n_x}f(x)|$. We can then apply Lemma  \ref{LEML1-3} and we get
$$\|S_{n_x}f\|_{L^1(I_x)}\geq \delta\frac{M\tau(n_x)}{n_x(\log n_x)^{2+\veps/2}}.$$
$(I_x)_{x\in E_M(\tau,f)}$ is a covering of $E_M(\tau,f)$. We can extract a Vitali's covering, namely a countable
family of disjoint intervals $(I_i)_{i\in\NN}$, of length $1/n_i$, such that $E_M(\tau,f)\subset\bigcup_{i\in\NN}5B_i$.
Then, Corollary \ref{CORL1-1} implies
\begin{eqnarray*}
C\|f\|_1&\geq&\int_{\TT}\sup_{n\geq 2} \frac{|S_n f(x)|}{(\log n)^{1+\veps/2}}dx\\
&\geq&\sum_i \int_{I_i}\frac{|S_{n_i}f(x)|}{(\log n_i)^{1+\veps/2}}dx\\
&\geq&\delta M\sum_i \frac{|I_i|\tau(1/|I_i|)}{\big(\log(1/|I_i|)\big)^{3+\veps}}.
\end{eqnarray*}
This yields $\sum_i \phi(5|I_i|)\leq\frac{C\|f\|_1}{\delta M}$ (we recall that $\tau$ is increasing), with $C$ another absolute constant and $M>0$ as large as we want.
Hence, $\mathcal H^{\phi}(E_M(\tau,f))\leq\frac{C\|f\|_1}{\delta M} $ (the length of the intervals of the covering can be arbitrarily small).
This in turn implies $\mathcal H^{\phi}(E(\tau,f))=0$, since $E(\tau,f)=\bigcap_{M>0} E_M(\tau,f)$.
\end{proof}

By applying the previous proposition to $\tau(s)=s^\beta$ and $\phi(s)=s^{1-\beta}/\log(s^{-1})^4$, we get:

\begin{corollary}\label{CORAUBRYL1}
For any $f\in L^1(\TT)$ and any $\beta\in[0,1]$, $\dim_{\mathcal H}(E(\beta,f))\leq 1-\beta.$
\end{corollary}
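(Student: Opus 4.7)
The plan is to deduce the corollary from Proposition \ref{PROPL1} by approximating $\beta$ from below. The case $\beta=0$ is trivial, since any subset of $\TT$ has Hausdorff dimension at most $1$, so I focus on $\beta\in(0,1]$. The key bridging observation is that, although $E(\beta,f)$ is defined by an \emph{exact} value of $\limsup_n\log|S_n f(x)|/\log n$, one still has the inclusion $E(\beta,f)\subset E(\tau_{\beta'},f)$ for every $\beta'\in(0,\beta)$, where $\tau_{\beta'}(s)=s^{\beta'}$. Indeed, picking $\beta''\in(\beta',\beta)$, any $x\in E(\beta,f)$ satisfies $|S_n f(x)|\ge n^{\beta''}$ for infinitely many $n$; dividing by $n^{\beta'}$ forces $\limsup |S_n f(x)|/n^{\beta'}=+\infty$.

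I then apply Proposition \ref{PROPL1} with $\tau=\tau_{\beta'}$ and the gauge $\phi(s)=s^{1-\beta'}/\log(s^{-1})^4$, so that $\nu=4>3$. The upper estimate $\phi(s)\le c_2\, s\,\tau_{\beta'}(s^{-1})/\log(s^{-1})^{4}$ holds with $c_2=1$, and the lower estimate $c_1 s\le\phi(s)$ reduces to $c_1\log(s^{-1})^4\le s^{-\beta'}$, which is valid near $0$ thanks to $\beta'>0$. Proposition \ref{PROPL1} thus delivers $\mathcal H^\phi(E(\tau_{\beta'},f))=0$. Since for every $\delta>0$ one has $s^{1-\beta'+\delta}\le\phi(s)$ for $s$ small enough (because $s^\delta\log(s^{-1})^4\to 0$), monotonicity of the Hausdorff measure in the gauge gives $\mathcal H^{1-\beta'+\delta}(E(\tau_{\beta'},f))=0$, so $\dim_{\mathcal H}E(\tau_{\beta'},f)\le 1-\beta'$. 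Combining with the inclusion yields $\dim_{\mathcal H}E(\beta,f)\le 1-\beta'$, and letting $\beta'\nearrow\beta$ finishes the proof.

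There is no serious obstacle here: the analytic weight of the argument has already been absorbed by Proposition \ref{PROPL1}, which relies on the weak $L^1$ maximal inequality of Corollary \ref{CORL1-1} and the localization provided by Lemma \ref{LEML1-3}. The only point requiring care is the mismatch between the exact-$\limsup$ definition of $E(\beta,f)$ and the $\limsup=+\infty$ framework of $E(\tau,f)$, which is resolved by the elementary $\beta'\to\beta$ device described above.
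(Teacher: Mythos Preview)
Your proof is correct and follows essentially the same route as the paper, which simply says ``apply the previous proposition with $\tau(s)=s^\beta$ and $\phi(s)=s^{1-\beta}/\log(s^{-1})^4$.'' You have merely made explicit the approximation $\beta'\nearrow\beta$ needed to pass from the sets $E(\tau_{\beta'},f)$ handled by Proposition~\ref{PROPL1} to the exact-$\limsup$ level set $E(\beta,f)$, a bridging step the paper's one-line proof leaves to the reader.
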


\section{Prevalence of multifractal behaviour}\label{SECPREVALENCE}
\subsection{Strategy}
In all this part, $p$ is a fixed real number such that $1\le p<+\infty$. To
prove that a set $A\subset E$ is Haar-null, the Lebesgue measure on the unit ball of a finite-dimensional subspace $V$
can often play the role of the transverse measure. Precisely, if there exists a finite-dimensional subspace $V$ of $E$
such that, for any $x\in E$, $V\cap (x+A)$ has full Lebesgue-measure,
then $A$ is prevalent. Such a finite-dimensional subspace $V$ is called a \emph{probe} for $A$. Of course, it is the same to prove that for any $x\in E$, $(x+V)\cap A$ has full Lebesgue-measure.

We shall use this property to prove prevalence. More precisely, we shall first prove that,
for a fixed $\beta\in[0,1/p]$, the set of functions $f$ in $L^p(\TT)$ satisfying $\dim_{\mathcal H}\big(E(\beta,f)\big)=1-\beta p$
is prevalent. Then we will conclude because a countable intersection of prevalent sets is prevalent.

\subsection{The construction of saturating functions with disjoint spectra}
In this subsection, $\alpha>1$ is fixed. For $j\geq 1$, we define $J=[j/\alpha]+1$, which is smaller than $j-2$
if $j$ is large enough, say $j\geq j_\alpha$. For $0\leq K\leq 2^J-1$, we define the dyadic intervals 
$$I_{K,j}:=\left[\frac{K}{2^J}-\frac1{2^j};\frac{K}{2^J}+\frac1{2^j}\right].$$
We also define
$$\mathbf I_j:=\bigcup_{K=0}^{2^J-1}I_{K,j}\quad\textrm{ and }\quad\mathbf
I'_j:=\bigcup_{K=0}^{2^J-1}2I_{K,j}.$$
The condition $j\ge j_\alpha$ ensures that the $2I_{K,j}$ do not overlap. 
We finally introduce $D_\alpha$ the set of real numbers in $[0,1]$ which are $\alpha$-approximable by dyadics. 
Namely, $x\in[0,1]$ belongs to $D_\alpha$ if there exist two sequences of
integers $(k_n)_{n\ge 0}$ and $(j_n)_{n\ge 0}$ such that
$$\left|x-\frac{k_n}{2^{j_n}}\right|\leq \frac1{2^{\alpha j_n}}.$$
It is easy to check that $D_\alpha$ is contained in $\displaystyle\limsup_{j\to+\infty}\mathbf I_j$. Indeed, let $x\in D_\alpha$.
One may find $J$ as large as we want and $K$ such that $|x-K/2^J|\leq
1/2^{\alpha J}$.  Let $j$ be an integer such that $J-1=[j/\alpha]$ (such an
integer exists because $\alpha\ge 1$). We get
$$\left|x-\frac{K}{2^J}\right|\leq\frac1{2^j}.$$
Finally, $x\in \mathbf I_j$.
Furthermore, it is well-known
that $\dim_{\mathcal H}(D_\alpha)=1/\alpha$ and even that $\mathcal H^{1/\alpha}(D_\alpha)=+\infty$
(see for instance \cite{BV06} and the mass transference principle). It follows that 
$$\dim_{\mathcal H}\left(\displaystyle\limsup_{j\to+\infty}\mathbf I_j\right)\ge\frac1\alpha.$$

We are going to build finite families of functions which behave badly on each $\mathbf I_j$, and which have disjoint spectra. 
The starting point is a modification of the basic construction of \cite{BH11}.
\begin{lemma}\label{LEMBASIC}
Let $j\geq j_\alpha$ and $J=[j/\alpha]+1$. There exists a trigonometric polynomial $P_j$ with spectrum contained in 
$(0,2^{j+1}-1]$ such that
\begin{itemize}
\item $\|P_j\|_p\leq 1$
\item $|P_j(x)|\geq C 2^{-(J-j)/p}$ for any $x\in \mathbf I_j$
\end{itemize}
where the constant $C$ is independant of $j$.
\end{lemma}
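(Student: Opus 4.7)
The plan is to build $P_j$ as a normalized modulated superposition of well-separated bump polynomials centered at the dyadic points $K/2^J$. First, I would fix an auxiliary bump polynomial $\phi$ of degree $\le 2^{j-1}$ which is essentially concentrated in $[-2^{-j},2^{-j}]$: for instance, one may take $\phi=F_{n_0}^2/F_{n_0}(0)^2$, where $F_{n_0}$ denotes the Fej\'er kernel of order $n_0$ and $n_0\asymp 2^{j-1}$. Classical pointwise estimates for $F_{n_0}$ then yield $\phi(0)=1$, $\phi(x)\ge c_1$ for $|x|\le 2^{-j}$, and the crucial decay estimate $0\le\phi(x)\le C/(n_0 x)^4$ for $2^{-j}\le|x|\le 1/2$; the quartic decay (obtained by squaring the Fej\'er kernel) will be essential in controlling cross terms.

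I would then set
$$Q_j(x):=\sum_{K=0}^{2^J-1}\phi\!\left(x-\frac{K}{2^J}\right)$$
and define $P_j=Z_j^{-1}e^{2\pi imx}Q_j$, where $m$ is chosen so as to translate the spectrum of $Q_j$ into $(0,2^{j+1}-1]$, and $Z_j:=\|Q_j\|_p$. For the lower bound on $\mathbf I_j$: if $x\in I_{K,j}$, the central term satisfies $\phi(x-K/2^J)\ge c_1$, while the cross terms obey
$$\sum_{K'\neq K}\phi\!\left(x-\frac{K'}{2^J}\right)\le C\sum_{K'\neq K}\frac{1}{\bigl(n_0|K'-K|/2^J\bigr)^4}\le C'\,2^{4(J-j)},$$
which is less than $c_1/2$ for $j$ large since $J<j$. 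Therefore $|Q_j(x)|\ge c_1/2$ on $\mathbf I_j$.

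For the upper bound on $\|Q_j\|_p$, I would partition $\TT$ into $2^J$ intervals of length $2^{-J}$ centered at the points $K/2^J$. On each such interval the same decay estimate shows that $|Q_j|^p$ is dominated by a constant times the contribution of the single central bump, so
$$\|Q_j\|_p^p\le C\,2^J\,\|\phi\|_p^p.$$
A direct integration using the pointwise bounds on $\phi$ yields $\|\phi\|_p^p\lesssim 1/n_0\asymp 2^{-j}$, hence $\|Q_j\|_p\lesssim 2^{(J-j)/p}$. Choosing $Z_j$ of this order gives $\|P_j\|_p\le 1$ together with the required lower bound $|P_j(x)|\ge c''\,2^{-(J-j)/p}$ on $\mathbf I_j$.

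The main obstacle is precisely this $L^p$ estimate on $Q_j$: a plain triangle inequality would give only $\|Q_j\|_p\le 2^J\|\phi\|_p\asymp 2^{J-j/p}$, which after normalization leads to $2^{j/p-J}$ rather than the sharp $2^{(j-J)/p}$. Exploiting the near-disjoint support of the shifted bumps through the quartic decay of $\phi$ is thus the key point; everything else in the argument is routine.
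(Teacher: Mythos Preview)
Your argument is correct, but the paper takes a quite different and shorter route. Instead of summing shifted bump polynomials, the authors start from the piecewise linear cutoff $\chi_j$ that equals $1$ on $\mathbf I_j$ and $0$ outside $\mathbf I'_j$ (with $\|\chi_j'\|_\infty\le 2^j$), and set
\[
P_j:=2^{-(J-j+2)/p}\,e_{2^j}\,\sigma_{2^j}\chi_j.
\]
The $L^p$ bound is then immediate: the Fej\'er operator is a contraction on $L^p$ and $\|\chi_j\|_p^p\le |\mathbf I'_j|=2^{J-j+2}$, so no cross-term analysis is needed at all. The pointwise lower bound on $\mathbf I_j$ comes from a quantitative Fej\'er approximation lemma (Lemma~1.7 of \cite{BH11}) applied to $1-\chi_j$, yielding $\sigma_{2^j}\chi_j\ge 1/4$ on $\mathbf I_j$. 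In short, the paper trades your explicit bump construction and decay estimates for one appeal to an external lemma; your approach is more self-contained but longer, and the crucial observation you single out (that the naive triangle inequality loses a factor and one must exploit near-disjointness) is exactly what the paper's choice of $\chi_j$ sidesteps.

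One small slip: you ask for $\deg\phi\le 2^{j-1}$ but with $n_0\asymp 2^{j-1}$ the squared Fej\'er kernel has degree $2(n_0-1)\asymp 2^j$. This is harmless, since a degree up to $2^j-1$ still fits in $(0,2^{j+1}-1]$ after modulation; just take $n_0\le 2^{j-1}$ and adjust the degree claim accordingly.
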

\begin{proof}
Let $\chi_j$ be a continuous piecewise linear function equal to 1 on $\mathbf I_j$,
equal to 0 outside $\mathbf I'_j$ and satisfying $0\leq \chi_j\leq 1$
and $\|\chi_j'\|_\infty\leq 2^j$. $P_j$ is defined by 
$$P_j:=2^{-(J-j+2)/p}e_{2^j}\sigma_{2^j}\chi_j.$$
The $L^p$-norm of $P_j$ is clearly less than or equal to 1 (observe that the measure of $\mathbf I'_J$ is $2^{J-j+2}$).
Applying Lemma 1.7 of \cite{BH11} to $1-\chi_j$, we find 
that $\sigma_{2^j}\chi_j(x)\geq 1/4$ for any $x\in\mathbf I_j$. This gives  
the second assertion of the lemma.
\end{proof}
We now collapse these polynomials to get as many saturating functions as necessary, with disjoint spectra.
\begin{lemma}\label{LEMSFUNCTIONS}
Let $s\geq 1$. There exist functions $g_1,\dots,g_s$ in $L^p(\TT)$ and sequences of integers $(n_{j,r})_{j\geq j_\alpha,1\leq r\leq s}$,
$(m_{j,r})_{j\geq j_\alpha,1\leq r\leq s}$ satisfying
\begin{itemize}
\item $1\leq m_{j,r}<n_{j,r}\leq C2^j$ for any $j$ and any $r$;
\item for any $j\geq j_\alpha$, any $x\in\mathbf I_j$, any $r\in\{1,\dots,s\}$,
$$\left|S_{n_{j,r}}g_r(x)-S_{m_{j,r}}g_r(x)\right|\ge \frac{C}{j^2}2^{(j-J)/p}$$
\item for any $r\in\{1,\dots,s\}$, the spectrum of $g_r$ is included in
  $\bigcup_{j\ge j_\alpha}(m_{j,r},n_{j,r}]=:G_r$
\item if $r_1\not=r_2$, $G_{r_1}\cap G_{r_2}=\emptyset$.
\end{itemize}
\end{lemma}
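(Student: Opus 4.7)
The plan is to take each polynomial $P_j$ from Lemma \ref{LEMBASIC}, modulate it by a complex exponential $e_{N_{j,r}}$ to shift its spectrum into a prescribed window, and then define $g_r$ as a weighted $L^p$-convergent sum of these modulations over $j$. Everything then reduces to choosing the frequencies $N_{j,r}$ so that the resulting spectral blocks are pairwise disjoint, both across scales $j$ and across colors $r$, while each block remains inside $[0,C\, 2^j]$.

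For the frequency bookkeeping I would use a dyadic annulus at each scale. Setting $C:=4s$, I place at level $j$ the $s$ blocks inside the annulus $(C\,2^{j-1},\,C\,2^j]$. Since each shifted spectrum of $P_j$ has length at most $2^{j+1}-1$, there is exactly room to fit $s$ such blocks side by side. Concretely I take
\[
N_{j,r}:=C\,2^{j-1}+(r-1)\cdot 2^{j+1},\qquad m_{j,r}:=N_{j,r},\qquad n_{j,r}:=N_{j,r}+2^{j+1}-1,
\]
and $Q_{j,r}:=e_{N_{j,r}}P_j$. Then $m_{j,r}<n_{j,r}\le C\,2^j$, the intervals $(m_{j,r},n_{j,r}]$ for $r=1,\dots,s$ are pairwise disjoint and contained in the level-$j$ annulus, and annuli at distinct levels $j$ are themselves disjoint, so the whole family of spectral windows is pairwise disjoint.

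The saturating functions are then
\[
g_r:=\sum_{j\ge j_\alpha}\frac{1}{j^2}\,Q_{j,r},
\]
which converges in $L^p(\TT)$ since $\|Q_{j,r}\|_p=\|P_j\|_p\le 1$ and $\sum j^{-2}<\infty$. Spectral disjointness makes the spectrum of $g_r$ equal to $G_r=\bigcup_{j\ge j_\alpha}(m_{j,r},n_{j,r}]$, with the $G_r$ pairwise disjoint, and it also yields
\[
S_{n_{j,r}}g_r-S_{m_{j,r}}g_r=\frac{1}{j^2}\,Q_{j,r},
\]
since no other term of the series contributes a frequency inside $(m_{j,r},n_{j,r}]$. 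Because $|Q_{j,r}|=|P_j|$, the lower bound $|P_j(x)|\ge C'\,2^{(j-J)/p}$ on $\mathbf I_j$ provided by Lemma \ref{LEMBASIC} then delivers the required estimate.

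The only genuine obstacle I anticipate is precisely this frequency packing: the constraint $n_{j,r}\le C\,2^j$ forces all level-$j$ spectra to live in $[0,C\,2^j]$, while disjointness across levels demands that lower-level spectra already live in $[0,C\,2^{j-1}]$. The dyadic-annulus placement above resolves both constraints simultaneously, at the cost of taking $C$ proportional to $s$.
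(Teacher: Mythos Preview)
Your proposal is correct and follows essentially the same approach as the paper. Both arguments modulate $P_j$ by an exponential $e_{N_{j,r}}$ and sum with weights $j^{-2}$; your choice $N_{j,r}=(s+r-1)2^{j+1}$ differs from the paper's $N_{j,r}=(s+r)2^{j+1}$ only by a shift of one block, and your ``dyadic annulus'' description is just a slightly more explicit way of verifying the same disjointness that the paper checks via the inequalities $n_{j,r}<m_{j,r+1}$ and $n_{j,s}<m_{j+1,1}$.
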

\begin{proof}
For $r\in\{1,\dots,s\}$, we set
\begin{eqnarray*}
g_r&:=&\sum_{j\geq j_\alpha}\frac1{j^2}e_{(s+r)2^{j+1}}P_j.
\end{eqnarray*}
Define 
\begin{eqnarray*}
m_{j,r}&:=&(s+r)2^{j+1}\\
n_{j,r}&:=&(s+r)2^{j+1}+(2^{j+1}-1)
\end{eqnarray*}
so that each $g_r$ belongs to $L^p$ with spectrum included in $\bigcup_{j\ge
  j_\alpha}(m_{j,r},n_{j,r}]$. Moreover, the intervals $(m_{j,r},n_{j,r}]$ are
disjoint, so that 
$$\left|S_{n_{j,r}}g_r-S_{m_{j,r}}g_r\right|=\frac1{j^2}|P_j|.$$

Let us also remark that, for any $j\ge j_\alpha$ and any $r<s$, $n_{j,r}<m_{j,r+1}$ and
$n_{j,s}<m_{j+1,1}$ so that the spectra $G_1,\cdots , G_s$ are disjoint. This
ends up the proof. 
\end{proof}
It is easy to show that, if $x\in\limsup_j \mathbf I_j$, $r\in\{1,\dots,s\}$
and  $\beta<\frac1p\left(1-\frac1\alpha\right)$, then 
$$\limsup_{n\to +\infty} \frac{|S_n g_r(x)|}{n^\beta}=+\infty.$$
In some sense, the functions $g_r$ have the worst possible behaviour on
$\mathbf I_j$ if we keep in mind that they have to belong to $L^p(\TT)$. We now show that this property remains true almost everywhere (in the sense of the lebesgue measure)  on any affine subspace
$f+\textrm{span}(g_1,\dots,g_s)$ provided $s$ is large enough. This is the main step towards the proof of Theorem \ref{THMPREVALENCE}.
\subsection{Prevalence of divergence for a fixed divergence index}
We keep the notations of the previous subsection.
\begin{proposition}\label{PROPPREVALENCE}
Let $0<\beta<\frac1p\left(1-\frac1\alpha\right)$. There exists $s\geq 1$ such that, for every $f\in L^p(\TT)$, 
for almost every $c=(c_1,\dots,c_s)$ in $\mathbb R^s$, the function 
$g=f+c_1g_1+\dots+c_sg_s$ satisfies for every $x\in D_\alpha$
$$\limsup_{n\to +\infty}\frac{|S_ng(x)|}{n^\beta}=+\infty.$$
\end{proposition}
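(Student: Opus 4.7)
The plan is to reduce to a Borel--Cantelli argument in $c$-space, crucially exploiting the disjoint-spectra structure from Lemma \ref{LEMSFUNCTIONS}. With $\Delta_{j,r} := S_{n_{j,r}} - S_{m_{j,r}}$, the disjointness of the $G_r$ gives the clean identity $\Delta_{j,r} g(x) = \Delta_{j,r} f(x) + c_r \Delta_{j,r} g_r(x)$. It will suffice to show that, for each $M \in \NN$, the set of $c \in \RR^s$ for which there exists $x \in D_\alpha$ with $\sup_n |S_n g(x)|/n^\beta \leq M$ has Lebesgue measure zero; a countable union over $M$ then gives the proposition.

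For such an $x$, for each $j$ with $x \in \mathbf I_j$ (infinitely many, since $D_\alpha \subseteq \limsup_j \mathbf I_j$) and each $r$, the inequality $|\Delta_{j,r} g(x)| \leq 2M n_{j,r}^\beta \leq CM 2^{j\beta}$ combined with the lower bound $|\Delta_{j,r} g_r(x)| \geq Cj^{-2} 2^{(j-J)/p}$ from Lemma \ref{LEMSFUNCTIONS} forces $|c_r - z_{j,r}(x)| \leq \eta_j$, where $z_{j,r}(x) := -\Delta_{j,r} f(x)/\Delta_{j,r} g_r(x)$ and $\eta_j = CMj^2 2^{-j\eta'}$ with $\eta' := \frac{1}{p}\left(1-\frac{1}{\alpha}\right) - \beta > 0$. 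Hence the ``bad'' $c$-set is contained in $\limsup_j B_j$, where
$$B_j := \bigcup_{x \in D_\alpha \cap \mathbf I_j} \prod_{r=1}^s \bigl[z_{j,r}(x) - \eta_j,\ z_{j,r}(x) + \eta_j\bigr],$$
and it is enough to choose $s$ such that $\sum_j |B_j| < \infty$.

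To bound $|B_j|$ I use the natural cover $\mathbf I_j = \bigcup_{K=0}^{2^J-1} I_{K,j}$ and view $B_j$ as a union of at most $2^J$ many $\eta_j$-neighborhoods (in $\ell^\infty$) of the curves $x \mapsto z_j(x) := (z_{j,1}(x), \ldots, z_{j,s}(x))$ on each $I_{K,j}$. Bernstein's inequality applied to $\Delta_{j,r} f$ and $\Delta_{j,r} g_r$ (both of degree $\leq C 2^j$), a Nikolsky-type bound on $\|\Delta_{j,r} f\|_\infty$, and the uniform lower bound on $|\Delta_{j,r} g_r|$ together yield a Lipschitz constant $L_j \leq C(f) j^{O(1)} 2^{j + J/p}$ for $z_j$ on each $I_{K,j}$. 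The classical tube-around-curve estimate then gives
$$|B_j| \leq C_s \cdot 2^J \cdot \bigl(L_j |I_{K,j}| \eta_j^{s-1} + \eta_j^s\bigr) \leq C(f) M^s j^{O(s)} \cdot 2^{J(1+1/p) - (s-1) j \eta'}.$$
Since $J \leq j/\alpha + 1$, the exponential factor is $\leq 2^{-j\left((s-1)\eta' - (1+1/p)/\alpha\right) + O(1)}$, which is summable as soon as $(s-1)\eta' > (1+1/p)/\alpha$. This determines the choice of $s = s(\alpha, p, \beta)$, and the first Borel--Cantelli lemma then yields $|\limsup_j B_j| = 0$.

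The main obstacle is the potentially huge Lipschitz constant $L_j \sim 2^{j(1 + 1/(p\alpha))}$: both trigonometric polynomials have degree $\sim 2^j$ and the denominator $|\Delta_{j,r} g_r|$ is only barely bounded away from zero. This is saved by two independent sources of smallness --- the freedom to take $s$ as large as needed, and the strict inequality $\eta' > 0$ coming directly from the hypothesis $\beta < \frac{1}{p}(1-\frac{1}{\alpha})$ --- which together force the tube volume to decay exponentially in $j$ and make Borel--Cantelli applicable.
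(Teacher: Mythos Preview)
Your argument is correct and follows the same overall strategy as the paper: exploit the disjoint-spectra identity $\Delta_{j,r}g=\Delta_{j,r}f+c_r\,\Delta_{j,r}g_r$ to reduce to a Borel--Cantelli estimate in $c$-space, and control spatial oscillation via Bernstein/Nikolsky. The difference lies only in how the volume of the ``bad'' $c$-set at stage $j$ is bounded. The paper subdivides each $I_{K,j}$ into $2^j$ subintervals of length $2^{-2j+1}$ (hence $2^{J+j}$ cells in all), then shows directly---by comparing two candidates $c,c^0$ attached to points in the same cell---that the bad set for one cell has $\ell^\infty$-diameter at most $2^{-\veps j/2}$; this gives the volume bound $2^{2j-\veps sj/2}$ and the requirement $s>4/\veps$. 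You instead keep the coarse cover by the $2^J$ intervals $I_{K,j}$ and invoke a tube-around-a-Lipschitz-curve estimate for $x\mapsto z_j(x)$. Both devices encode the same geometric fact (the bad $c$'s accumulate along a one-dimensional object), and both succeed because $s$ can be taken as large as needed. Your route avoids the auxiliary restriction $\|c\|_\infty\le R$ that the paper introduces.

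One computational slip: your Lipschitz bound $L_j\le C(f)\,j^{O(1)}2^{j+J/p}$ is off by a factor $2^{J/p}$. In the quotient rule for $z_{j,r}'$ the term $|\Delta_{j,r}f|\cdot|(\Delta_{j,r}g_r)'|\,/\,|\Delta_{j,r}g_r|^2$ contributes, and since $|\Delta_{j,r}g_r|^{-2}\le Cj^{4}2^{-2(j-J)/p}$ on $\mathbf I_j$, this term is of order $j^{O(1)}2^{j+2J/p}$. The corrected tube estimate then carries $2^{J(1+2/p)}$ rather than $2^{J(1+1/p)}$, so the summability condition becomes $(s-1)\eta'>(1+2/p)/\alpha$. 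This is still met by choosing $s$ large, so the conclusion is unaffected.
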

\begin{proof}
We set $\varepsilon=\frac1p\left(1-\frac1\alpha\right)-\beta$. Let
$s>4/\varepsilon$ and let $f$ be an arbitrary function in $L^p(\TT)$. For
such a value of $s$, we will
prove the conclusion of the proposition for every $x\in\limsup_j \mathbf I_j$
(recall that $D_\alpha\subset\limsup_j \mathbf I_j$).

\smallskip
Let $M>0$ and let us introduce
$$S_{M}:=\left\{g\in L^p(\mathbb T);\ \exists x\in \limsup_{j\to +\infty} \mathbf I_j\textrm{ s.t. }\forall n\geq 1,\ |S_ng(x)|\leq Mn^\beta\right\}.$$
It is enough to show that for every $R>0$, the set of $c\in\mathbb R^s$
satisfying $\|c\|_\infty\le R$ and such that $f+c_1g_1+\dots+c_sg_s$ belongs
to $S_{M}$ has Lebesgue measure 0. In the sequel, we will fix such values of
$M$ and $R$.

\smallskip
If $j\geq 1$, we split each interval $I_{K,j}$ into $2^j$ subintervals. 
Each of them has size $2^{-2j+1}$, and we get $2^{J+j}$ intervals $O_{l,j}$ with
$\bigcup_{l=1}^{2^{J+j}}O_{l,j}=\mathbf I_j$. For $j\geq 1$, $l\in\{1,\dots,2^{J+j}\}$, we set
\begin{eqnarray*}
S_{M}^{(l,j)}&:=&\big\{g\in L^p(\mathbb T);\ \exists x\in O_{l,j}\textrm{ s.t. }
\forall n\geq 1,\ |S_ng(x)|\leq Mn^\beta\big\}.\\
\end{eqnarray*}
Clearly, 
$$S_{M}\subset\limsup_{j\to+\infty}\bigcup_{l=1}^{2^{J+j}}S_{M}^{(l,j)}$$
and we shall first control the size of the $c\in\RR^s$ with $\|c\|_\infty\le
R$ such that 
$$f+c_1g_1+\dots+c_sg_s\in S_{M}^{(l,j)}.$$

We denote by $\lambda_s$ the Lebesgue measure on $\mathbb R^s$ and we fix
$j\geq j_\alpha$, $l$ in $\{1,\dots,2^{J+j}\}$ and $c,c^0$ in $\mathbb R^s$
such that
$\|c\|_\infty\le R$, $\|c^0\|_\infty\le R$ and
$$
\left\{
\begin{array}{rcl}
f+c_1g_1+\dots+c_sg_s&\in&S_{M}^{(l,j)}\\
f+c_1^0 g_1+\dots+c_s^0g_s&\in&S_{M}^{(l,j)}.
\end{array}
\right.
$$
Let $r\in\{1,\dots,s\}$ and let us apply the definition of $S_{M}^{(l,j)}$
with $n=n_{j,r}$ and $n=m_{j,r}$. The spectra $(G_l)_{l\not= r}$ being disjoint from $G_r$, we can find
$x\in O_{l,j}$ such that
$$
\left|S_{n_{j,r}}f(x)-S_{m_{j,r}}f(x)+c_r\big(S_{n_{j,r}}g_r(x)-S_{m_{j,r}}g_r(x)
\big)\right|\leq Mn_{j,r}^\beta+Mm_{j,r}^\beta\leq 2CM2^{\beta j}.
$$
In the same way, we can find $y\in O_{l,j}$ such that 
$$
\left|S_{n_{j,r}}f(y)-S_{m_{j,r}}f(y)+c_r^0\big(S_{n_{j,r}}g_r(y)-S_{m_{j,r}}g_r(y)
\big)\right|\leq 2CM2^{\beta j}.
$$
Using the triangle inequality, we get 
\begin{eqnarray}
\left|c_r\big(S_{n_{j,r}}g_r(x)-S_{m_{j,r}}g_r(x)\big)-c_r^0\big(S_{n_{j,r}}g_r(y)-S_{m_{j,r}}g_r(y)\big)\right|&\leq&\nonumber\\
\quad\quad 4CM2^{\beta j}+|S_{n_{j,r}}f(x)-S_{n_{j,r}}f(y)|+|S_{m_{j,r}}f(x)-S_{m_{j,r}}f(y)|.\label{EQPREVALENCE1}
\end{eqnarray}
Now, by combining the norm of the Riesz projection, Nikolsky's inequality and Bernstein's inequality, 
we know that
$$\|(S_n f)'\|_\infty\leq C(\log n) n^{1+1/p}\|f\|_p$$
(the factor $\log n$ disappears when $p>1$). This yields
\begin{eqnarray*}
|S_{n_{j,r}}f(x)-S_{n_{j,r}}f(y)|&\leq&C\log(n_{j,r})n_{j,r}^{1+1/p}|x-y|\|f\|_p\\
&\leq&Cj2^{j(1+1/p)}2^{-2j+1}\|f\|_p\\&\ll&2^{\beta j}.
\end{eqnarray*}
The same is true for $|S_{m_{j,r}}f(x)-S_{m_{j,r}}f(y)|$ and we get 
\begin{eqnarray}\label{EQPREVALENCE2}
\left|c_r\big(S_{n_{j,r}}g_r(x)-S_{m_{j,r}}g_r(x)\big)-c_r^0\big(S_{n_{j,r}}g_r(y)-S_{m_{j,r}}g_r(y)\big)\right|\le
\kappa 2^{\beta j}
\end{eqnarray}
for some constant $\kappa$ depending on $M$ and $\|f\|_p$ but not on $j$. 

In the same way, 
$$\|(S_ng_r)'\|_\infty\leq C(\log n) n^{1+1/p}\|g_r\|_p\le C(\log n)
n^{1+1/p}.$$
 It follows that
\begin{eqnarray*}
\left|c_r^0\left(\big(S_{n_{j,r}}g_r(x)-S_{m_{j,r}}g_r(x)\big)- \big(S_{n_{j,r}}g_r(y)-S_{m_{j,r}}g_r(y)\big)\right)\right|&\leq&CRj2^{j(1+1/p)}2^{-2j+1}\\&\ll&2^{\beta j}.
\end{eqnarray*}
Combining with (\ref{EQPREVALENCE2}) we obtain a new constant $\kappa$ 
depending on $M$, $\|f\|_p$  and $R$ but not on $j$ such that
\begin{eqnarray}\label{EQPREVALENCE3}
\left|(c_r-c_r^0)\big(S_{n_{j,r}}g_r(x)-S_{m_{j,r}}g_r(x)\big)\right|\le \kappa 2^{\beta j}.
\end{eqnarray} 
Dividing (\ref{EQPREVALENCE3}) by $|S_{n_{j,r}}g_r(x)-S_{m_{j,r}}g_r(x)|$
(which is not equal to zero), we find 
\begin{eqnarray*}
|c_r-c_r^0|&\leq&\kappa 2^{\beta j}|S_{n_{j,r}}g_r(x)-S_{m_{j,r}}g_r(x)|^{-1}\\
&\leq&\frac\kappa{C} 2^{\beta j} j^22^{-(j-J)/p}\\
&\leq&\frac{\kappa 2^{1/p}}{C}j^22^{-\veps j}\\
&\leq& 2^{-\veps j/2}
\end{eqnarray*}
provided $j$ is large enough. Thus, the set of $c\in\mathbb R^s$ with $\|
c\|_\infty\le R$ and such that 
$f+c_1 g_1+\dots+c_s g_s\in S_{M}^{(l,j)}$ is contained in a ball (for the $l^\infty$-norm) of radius $2^{-\veps j/2}$.
Taking the $s$-dimensional Lebesgue measure, this yields
$$\lambda_s\left(\left\{c\in\mathbb R^s;\ \|c\|_\infty\le R\ \mbox{and}\ f+c_1g_1+\dots+c_sg_s\in S_M^{(l,j)}\right\}\right)\leq 2^s 2^{-\veps sj/2}.$$
This in turn gives
$$\lambda_s\left(\left\{c\in\mathbb R^s;\ \|c\|_\infty\le R\ \mbox{and}\ f+c_1g_1+\dots+c_sg_s\in \bigcup_{l=1}^{2^{J+j}}S_{M}^{(l,j)}\right\}\right)\leq 2^s 2^{2j-\veps sj/2}.$$
Thus, since $\veps s/2>2$, this last quantity is the general term of a
convergent series.  
Remember that 
$$S_{M}\subset\limsup_{j\to +\infty} \bigcup_{l=1}^{2^{J+j}} S_{M}^{(l,j)}.$$ The
conclusion of  Proposition \ref{PROPPREVALENCE} follows from Borel Cantelli's lemma.
\end{proof}

\begin{corollary}\label{CORPREVALENCE}
Let $\alpha>1$. For almost every function $f$ in $L^p(\TT)$, for every $x\in D_\alpha$, 
$$\limsup_{n\to +\infty} \frac{\log |S_n f(x)|}{\log n}\geq \frac1p\left(1-\frac1\alpha\right).$$
\end{corollary}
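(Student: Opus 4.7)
The plan is to deduce this from Proposition \ref{PROPPREVALENCE} via the countable stability of prevalence. Set $\beta_\star := \frac{1}{p}\bigl(1-\frac{1}{\alpha}\bigr)$. The elementary reduction I would use is: if $\limsup_n |S_nf(x)|/n^\beta = +\infty$ for some $\beta>0$, then automatically
$$\limsup_{n\to +\infty} \frac{\log|S_nf(x)|}{\log n} \geq \beta.$$
It therefore suffices to construct a prevalent subset of $L^p(\TT)$ on which the conclusion of Proposition \ref{PROPPREVALENCE} holds simultaneously for a sequence of exponents approaching $\beta_\star$.

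Concretely, I would fix a sequence $\beta_k \nearrow \beta_\star$ (with each $\beta_k < \beta_\star$) and, for each $k$, apply Proposition \ref{PROPPREVALENCE} with $\beta = \beta_k$. This produces an integer $s_k$ and functions $g_1^{(k)}, \ldots, g_{s_k}^{(k)} \in L^p(\TT)$ with the property that, for every $f \in L^p(\TT)$, Lebesgue-almost every choice of $(c_1,\ldots,c_{s_k})\in\RR^{s_k}$ yields a function $f + \sum_r c_r g_r^{(k)}$ lying in
$$\mathcal G_k := \left\{g\in L^p(\TT)\,:\,\forall x \in D_\alpha,\ \limsup_{n\to+\infty} \frac{|S_ng(x)|}{n^{\beta_k}} = +\infty\right\}.$$
This is exactly the statement that the finite-dimensional space $V_k := \textrm{span}(g_1^{(k)},\ldots,g_{s_k}^{(k)})$ is a probe for $\mathcal G_k$, so $\mathcal G_k$ is prevalent.

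Since the intersection of countably many prevalent sets is prevalent, $\mathcal G := \bigcap_{k\geq 1} \mathcal G_k$ is prevalent. For any $f \in \mathcal G$ and any $x \in D_\alpha$, the inequality
$$\limsup_{n\to+\infty}\frac{\log|S_nf(x)|}{\log n} \geq \beta_k$$
holds for every $k$, and letting $k \to +\infty$ gives the corollary. The only subtlety I foresee is a routine measurability check guaranteeing that each complement $\mathcal G_k^c$ is contained in a Borel Haar-null set; since $g\mapsto S_n g(x)$ is a continuous linear functional on $L^p(\TT)$ for each fixed $(n,x)$ and $D_\alpha$ is Borel in $\TT$, this presents no real obstacle and I would handle it by passing to a suitable Borel hull.
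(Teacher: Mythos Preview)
Your argument is correct and follows essentially the same route as the paper: apply Proposition~\ref{PROPPREVALENCE} along a sequence $\beta_k\nearrow\frac1p\left(1-\frac1\alpha\right)$, use that each resulting set is prevalent (via the probe $V_k$), and intersect. The paper's proof is stated more tersely but is the same in substance; your additional remarks on the elementary implication $\limsup_n|S_nf(x)|/n^{\beta}=+\infty\Rightarrow\limsup_n\log|S_nf(x)|/\log n\ge\beta$ and on the Borel measurability needed for Haar-nullness are accurate and simply make explicit what the paper leaves implicit.
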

\begin{proof}
This follows immediately from Proposition \ref{PROPPREVALENCE}, taking a sequence $(\beta_n)$ increasing to $\frac1p\left(1-\frac1\alpha\right)$
and using the fact that a countable intersection of prevalent sets remains prevalent.
\end{proof}

\subsection{The general case}
We are now able to complete the proof of Theorem \ref{THMPREVALENCE}, that is 
to prove that 
almost every function $f\in L^p(\TT)$ in the sense of
prevalence has a multifractal behaviour with respect to the summation of 
its Fourier series.
Indeed, let $(\alpha_k)_{k\ge 0}$ be a dense sequence in $(1,+\infty)$. By Corollary \ref{CORPREVALENCE}, for almost every function $f\in L^p(\TT)$,
for every $k\in\mathbb N$ and every $x\in D_{\alpha_k}$, 
$$\limsup_{n\to +\infty} \frac{\log |S_n f(x)|}{\log n}\geq \frac1p\left(1-\frac1{\alpha_k}\right).$$
Now, let $\alpha>1$ and consider a subsequence $(\alpha_{\phi(k)})_{k\ge 0}$ which increases to $\alpha$. Then 
$D_\alpha\subset \bigcap_{k\ge 0} D_{\alpha_{\phi(k)}}$ and for any $x\in D_\alpha$, 
$$\limsup_{n\to +\infty} \frac{\log |S_n f(x)|}{\log n}\geq \frac1p\left(1-\frac{1}{\alpha}\right).$$
The conclusion follows now exactly the argument of \cite{BH11}. For the sake of completeness, we give a
complete account. 
Define
\begin{eqnarray*}
D_\alpha^1&=&\left\{x\in D_\alpha;\ \limsup_{n\to +\infty} \frac{\log |S_nf(x)|}{\log n}=\frac1p\left(1-\frac{1}{\alpha}\right)\right\}\\
D_\alpha^2&=&\left\{x\in D_\alpha;\ \limsup_{n\to +\infty}\frac{\log |S_nf(x)|}{\log n}>\frac1p\left(1-\frac{1}{\alpha}\right)\right\},
\end{eqnarray*}
so that $\mathcal{H}^{1/\alpha}(D_\alpha^1\cup
D_\alpha^2)=\mathcal{H}^{1/\alpha}(D_\alpha)=+\infty$. It suffices 
to prove that $\mathcal{H}^{1/\alpha}(D_\alpha^2)=0$. 
Let $(\beta_n)_{n\ge 0}$ be a sequence of real numbers such that 
$$\beta_n
>\frac1p\left(1-\frac1\alpha\right)\quad \mbox{and}\quad\lim_{n\to
  +\infty}\beta_n=\frac1p\left(1-\frac1\alpha\right).$$

Let us observe that
$$D_\alpha^2\subset \bigcup_{n\ge 0}\mathcal E(\beta_n,f).$$
Moreover, Theorem \ref{THMAUBRY} for $p>1$ and Corollary \ref{CORAUBRYL1} for $p=1$ imply that
$\mathcal{H}^{1/\alpha}(\mathcal E(\beta_n,f))=0$ for
all $n$. Hence, $\mathcal{H}^{1/\alpha}(D_\alpha^2)=0$ and $\mathcal{H}^{1/\alpha}(D_\alpha^1)=+\infty$, which proves
that
$$\dim_\mathcal{H}\left(E\left(\frac1p\left(1-\frac{1}{\alpha}\right),f\right)\right)
\ge\frac1\alpha.$$
By Theorem \ref{THMAUBRY} and Corollary \ref{CORAUBRYL1} again, this
inequality is necessarily an equality. Finally, such a function $f$ satisfies
the conclusion of Theorem \ref{THMPREVALENCE}, setting $1-\beta p=1/\alpha$.

\section{Rapid divergence on big sets for Fourier series of continuous functions}\label{SECCT}
This section is devoted to the proof of Theorem \ref{THMCT} and Theorem \ref{THMCTPREVALENT}. We need to construct functions in $\mathcal C(\TT)$
for which the Fourier series behave badly on a set with Hausdorff dimension 1. We will construct these functions
by blocks. For $k\geq 1$ and $\omega>1$, we set
$$J_k^\omega:=\bigcup_{j=0}^{k-1}\left[\frac jk-\frac1{2\omega k},\frac jk+\frac1{2\omega k}\right]$$
which will be seen as a subset of $\TT=\mathbb R/\mathbb Z$. The construction
makes use of holomorphic functions, so that we will also see $\TT$ as the
boundary of the unit disk $\mathbb D$ and $J_k^\omega$ as a part of $\partial
\mathbb D$.
\begin{lemma}\label{LEMHOLO}
There exist three absolute constants $C_1,C_2,C_3>0$ such that, for any $k\geq 3$, for any $\omega\geq\log k$,
one can find a function $f$ which is holomorphic in a neighbourhood of $\DD$ and which satisfies :
\begin{eqnarray}
\forall z\in \overline{\mathbb D},\quad\Re ef(z)&\geq& \frac{C_1}{\omega k}\label{EQP1}\\
\forall z\in J_k^\omega,\quad |f(z)|&\geq& C_2{\omega }\label{EQP2}\\
\forall z\in \mathbb T,\quad |f(z)|&\leq& C_3{\omega }\label{EQP3}\\
\forall z\in \mathbb T,\quad \left|\frac{f'(z)}{f(z)}\right|&\leq& \omega k\label{EQP4}.
\end{eqnarray}
\end{lemma}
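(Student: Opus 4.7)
The plan is to give a completely explicit construction by a single rational function: set
$$f(z) := \frac{1}{1 - az^k} \quad \text{where} \quad a := 1 - \frac{1}{\omega}.$$
Since $\omega \geq \log k \geq \log 3 > 1$, one has $a \in (0,1)$, and the only poles of $f$ lie on the circle $|z| = a^{-1/k} > 1$; hence $f$ is holomorphic in the open neighbourhood $\{|z| < a^{-1/k}\}$ of $\overline{\mathbb D}$. I would then verify (\ref{EQP1})--(\ref{EQP4}) by direct computation, starting with the boundary estimates (which are easiest) and finishing with the disk estimate.

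For the boundary estimates, note that on $\mathbb T$ one has $|1 - az^k| \geq 1 - a = 1/\omega$, which immediately gives $|f(z)| \leq \omega$ and yields (\ref{EQP3}) with $C_3 = 1$. Conversely, if $z = e^{2\pi i \theta} \in J_k^\omega$ with $|\theta - j/k| \leq 1/(2\omega k)$ for some $j$, then $z^k = e^{2\pi i k(\theta - j/k)}$ with $|k(\theta-j/k)| \leq 1/(2\omega)$, so $|1 - z^k| = 2|\sin(\pi k(\theta-j/k))| \leq \pi/\omega$ and therefore
$$|1 - az^k| \leq (1-a) + a|1 - z^k| \leq (1+\pi)/\omega,$$
giving $|f(z)| \geq \omega/(1+\pi)$, which is (\ref{EQP2}) with $C_2 = 1/(1+\pi)$. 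For the logarithmic derivative, a direct computation yields $f'(z)/f(z) = akz^{k-1}/(1 - az^k)$, whose modulus on $\mathbb T$ is bounded by $ak/(1-a) = ak\omega \leq k\omega$, establishing (\ref{EQP4}).

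It remains to prove (\ref{EQP1}). For an arbitrary $z \in \overline{\mathbb D}$, write $w := 1 - az^k$; then $\operatorname{Re} w \geq 1 - a|z|^k \geq 1 - a = 1/\omega$, while $|w| \leq 1 + a \leq 2$. Consequently
$$\operatorname{Re} f(z) \;=\; \operatorname{Re}\frac{1}{w} \;=\; \frac{\operatorname{Re} w}{|w|^2} \;\geq\; \frac{1}{4\omega} \;\geq\; \frac{1}{4\omega k},$$
so (\ref{EQP1}) holds with $C_1 = 1/4$.

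I do not expect a genuine obstacle here: the content of the lemma is really the observation that a single rational function having $k$-fold rotational symmetry and a ``resolution parameter'' $1 - a = 1/\omega$ controls all four quantities simultaneously. The hypothesis $\omega \geq \log k$ is used only to ensure $a \in (0,1)$ (in fact $\omega > 1$ would suffice for this construction); the factor $k$ in the denominator of (\ref{EQP1}), which at first glance looks suspicious, is in fact harmless because the bound we prove in $\overline{\mathbb D}$ is stronger than required by a factor $k$, reflecting the fact that the construction concentrates the oscillation of $f$ on $\mathbb T$ rather than inside the disk.
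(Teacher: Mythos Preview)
Your proof is correct. In fact, your function is \emph{the same} as the paper's up to a harmless change of parameter: by the partial-fraction identity $\tfrac{1}{k}\sum_{j=0}^{k-1}\tfrac{1}{1-\zeta^{-j}w}=\tfrac{1}{1-w^k}$ (with $\zeta=e^{2\pi i/k}$), the paper's sum $\tfrac1k\sum_j\tfrac{1+\veps}{1+\veps-\overline{z_j}z}$ collapses to $\tfrac{1}{1-bz^k}$ with $b=(1+\veps)^{-k}=(1+1/(\omega k))^{-k}\approx 1-1/\omega$, which is essentially your $a$.

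The difference is purely one of presentation, but it is a meaningful one. By working with the closed form $1/(1-az^k)$ you turn every estimate into a single-line computation on $w=1-az^k$; in particular, (\ref{EQP3}) follows immediately from $|w|\geq 1-a$, whereas the paper, working term-by-term on the sum, has to split the index set into ranges and pick up an extra $\log k$ contribution from the ``far'' terms---this is exactly where the paper uses the hypothesis $\omega\geq\log k$, which your argument does not need (as you correctly observe, $\omega>1$ suffices). Your bound in (\ref{EQP1}) is also sharper: you obtain $\Re e f\geq 1/(4\omega)$ rather than the paper's $C_1/(\omega k)$, because estimating each summand separately and then averaging loses a factor $k$ that the closed form retains. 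So your route is genuinely more economical and yields slightly stronger conclusions.
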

\begin{proof}
We set:
\begin{eqnarray*}
\veps&=&\frac1{\omega k}\\
z_j&=&e^{\frac{2\pi ij}k},\ j=0,\dots,k-1\\
f(z)&=&\frac1k\sum_{j=0}^{k-1}\frac{1+\veps}{1+\veps-\overline{z_j}z}
\end{eqnarray*}
and we claim that $f$ is the function we are looking for.
Indeed, for any $z\in\overline{\DD}$ and any $j\in\{0,\dots,k-1\}$,
\begin{eqnarray}\label{EQBASIC1}
\Re e\left(\frac{1+\veps}{1+\veps-\overline{z_j}z}\right)=\frac{1+\veps}{|1+\veps-\overline{z_j} z|^2}\Re e\big(1+\veps-z_j\overline{z}\big)\geq\frac{1+\veps}{(2+\veps)^2}\times\veps\geq C_1\veps,
\end{eqnarray}
which proves (\ref{EQP1}). To prove (\ref{EQP2}), we may assume that $z=e^{2\pi i\theta}$ with 
$\theta\in\left[\frac{-\veps}{2};\frac{\veps}{2}\right]$. Then 
$$\Re e\left(\frac{1+\veps}{1+\veps-\overline{z_0}z}\right)=\frac{1+\veps}{|1+\veps-z|^2}\Re e\big(1+\veps-z\big)\geq\frac {C_2}\veps.$$
Moreover, (\ref{EQBASIC1}) says that for any $j$, $\Re
e\left(\frac{1+\veps}{1+\veps-\overline{z_j}z}\right)\ge 0$. It follows that 
$$\Re e f(z)\ge \frac{C_2}{k \veps}=C_2\omega.$$

Conversely, we want to control $\sup_{z\in\TT}|f(z)|$. Pick any $z=e^{2\pi i\theta}\in\TT$.
By symmetry, we may and shall assume that $|\theta|\leq\frac1{2k}$. Then we get
$$\left|\frac{1+\veps}{1+\veps-\overline{z_0}z}\right|\leq\frac{C}{\veps}$$
for some constant $C>0$. Now, for any $j\in\{1,\dots,k/4\}$, we can write
\begin{eqnarray*}
|1+\veps-\overline{z_j}z|&\geq& |\Im m(\overline{z_j}z)|\\
&\geq&\sin\left(\frac{2\pi j}k-2\pi\theta\right)\\
&\geq&\frac2\pi\times2\pi \left(\frac jk-\theta\right)\\
&\geq&\frac{4}k\left(j-\frac12\right).
\end{eqnarray*}
Taking the sum, 
$$\left|\sum_{j=1}^{k/4}\frac{1+\veps}{1+\veps-\overline{z_j}z}\right|
\leq \frac{k(1+\veps)}4\sum_{j=1}^{k/4}\frac{1}{j-1/2}\leq Ck\log k$$
(the constant $C$ may change from line to line). In the same way, we have 
$$\left|\sum_{j=3k/4}^{k-1}\frac{1+\veps}{1+\veps-\overline{z_j}z}\right|
\leq Ck\log k.$$
If $j\in[k/4,3k/4]$, we also have $|1+\veps-\overline{z_j}z|\geq C$, so that
$$\left|\sum_{j=k/4}^{3k/4}\frac{1+\veps}{1+\veps-\overline{z_j}z}\right|
\leq Ck.$$
Putting this together, we get 
$$|f(z)|=\left|\frac1k\sum_{j=0}^{k-1}\frac{1+\veps}{1+\veps-\overline{z_j}z}\right|\leq C\left(\frac{1}{k\veps}+\log k+1\right)\leq C_3\omega$$
(this is the place where we need that $\omega\geq\log k$).
Finally, it remains to prove (\ref{EQP4}). We observe that
$$\frac{f'(z)}{f(z)}=\frac{\sum_{j=0}^{k-1}\frac{\overline{z_j}}{(1+\veps-\overline{z_j}z)^2}}{\sum_{j=0}^{k-1}\frac{1}{1+\veps-\overline{z_j}z}}.$$
We deduce that
\begin{eqnarray*}
 \left|\frac{f'(z)}{f(z)}\right|&\leq&\frac{\sum_{j=0}^{k-1}\frac{1}{|1+\veps-\overline{z_j}z|^2}}
 {\sum_{j=0}^{k-1}\frac{\Re e(1+\veps-z_j\bar z)}{|1+\veps-\overline{z_j}z|^2}}\\
 &\leq&\frac{\sum_{j=0}^{k-1}\frac{1}{|1+\veps-\overline{z_j}z|^2}}
 {\sum_{j=0}^{k-1}\frac{\veps}{|1+\veps-\overline{z_j}z|^2}}\\
 &\leq&\frac 1\veps=\omega k.
\end{eqnarray*}
\end{proof}

The crucial step is given by the following lemma.
\begin{lemma}\label{LEMCLECT}
Let $(\veps_n)_{n\ge 1}$ be a sequence of positive real numbers decreasing to
zero. Then, if $n$ is large enough,
one can find an integer $k_n$, a real number $\omega_n>1$ and a trigonometric polynomial $P_n$
with spectrum in $[1,2n-1]$ such that
\begin{itemize}
\item $\|P_n\|_\infty\leq 1$;
\item For any $x\in J_{k_n}^{\omega_n}$, $|S_n P_n(x)|\geq\veps_n \log(n)$.
\end{itemize}
Moreover, we can choose $k_n$ and $\omega_n$ such that $(k_n)$ goes to $+\infty$ and $\omega_n=o(k_n^\alpha)$ for any $\alpha>0$.
\end{lemma}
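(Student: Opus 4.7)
The idea is to use the holomorphic function from Lemma~\ref{LEMHOLO} together with a Fej\'er--Zygmund-type trigonometric polynomial, in order to produce a polynomial $P_n$ of small sup norm whose partial Fourier sum $S_nP_n$ is logarithmically large on every arc of $J_{k_n}^{\omega_n}$ simultaneously.

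\textbf{Parameter choice.} I would select $k_n$ and $\omega_n$ so that $k_n\to+\infty$, $\omega_n\ge\log k_n$ (so that Lemma~\ref{LEMHOLO} applies), and $\omega_n=o(k_n^\alpha)$ for every $\alpha>0$; a natural choice is $\omega_n=\log k_n$. The rate at which $k_n$ diverges is calibrated to the target divergence $\veps_n\log n$, with $k_n$ allowed to diverge as slowly as necessary to accommodate the final estimate.

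\textbf{Construction.} Apply Lemma~\ref{LEMHOLO} with parameters $k_n,\omega_n$ to obtain a holomorphic function $f_n$ on a neighborhood of $\overline{\DD}$. Next, introduce a Fej\'er--Zygmund polynomial of the form $Q_n(x)=\sum_{\ell=1}^{L_n}\sin(2\pi(n-\ell)x)/\ell$, whose spectrum lies in $[n-L_n,n+L_n]$, whose sup norm is bounded by an absolute constant, and whose partial sum $|S_nQ_n(1/(4n))|$ is of order $\log L_n$ (the classical Fej\'er calculation). I would then define $P_n$ by multiplying (a polynomial truncation of) $f_n$ by $Q_n$, introducing a frequency shift so that the spectrum of the product lies in $[1,2n-1]$, and normalizing so that $\|P_n\|_\infty\le 1$. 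The bound on the sup norm uses Property~(P3) of Lemma~\ref{LEMHOLO} ($\|f_n\|_\infty\le C_3\omega_n$) together with the uniform bound on $\|Q_n\|_\infty$, the normalization constant absorbing the factor $\omega_n$ times the uniform bound on $Q_n$.

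\textbf{Lower bound on $|S_nP_n|$.} The key estimate is verified in two steps. First, at the centers $x=j/k_n$ of the arcs composing $J_{k_n}^{\omega_n}$: Property~(P2) ($|f_n(e^{2\pi ij/k_n})|\ge C_2\omega_n$) combined with the $\log L_n$-contribution from the Fej\'er--Zygmund part yields, after normalization, a lower bound for $|S_nP_n(j/k_n)|$ of order $\log L_n/\log \omega_n$, which exceeds $\veps_n \log n$ for a suitable choice of $L_n\le n$ and of the parameters. Second, we propagate this bound to the full sub-interval $I_{K,j}$ of length $1/(\omega_nk_n)$ centered at $j/k_n$: by Bernstein's inequality, $|(S_nP_n)'|\le n\|S_nP_n\|_\infty\le Cn\log n$, and Property~(P4) of Lemma~\ref{LEMHOLO} controls the oscillation of $f_n$; together they imply that $S_nP_n$ does not vary too quickly on intervals of length $1/(\omega_nk_n)$, provided the parameters are calibrated so that $n\log n/(\omega_nk_n)$ is small compared to the pointwise lower bound at the center.

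\textbf{Main obstacle.} Since the Lebesgue constant of $S_n$ is of order $\log n$, any trigonometric polynomial of sup norm~$1$ can have $|S_nP_n|$ at most of order $\log n$, and the construction has to (nearly) saturate this bound. The Fej\'er--Zygmund polynomial achieves saturation at a single point; Lemma~\ref{LEMHOLO} is the device that enables the same simultaneously at each of the $k_n$ centers $z_j=e^{2\pi ij/k_n}$, by providing a holomorphic profile with controlled sup norm, uniform lower bound on $J_{k_n}^{\omega_n}$, and controlled logarithmic derivative. The delicate bookkeeping lies in balancing the truncation error for $f_n$, the normalization constants, the frequency shift, and the choice of $k_n,\omega_n,L_n$ so that all four requirements (spectrum in $[1,2n-1]$, sup norm $\le 1$, divergence rate $\veps_n\log n$, and $\omega_n=o(k_n^\alpha)$ for every $\alpha>0$) are simultaneously met.
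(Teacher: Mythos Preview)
Your proposal has a genuine gap: multiplying (a truncation of) $f_n$ by a single Fej\'er--Zygmund polynomial $Q_n$ does \emph{not} transport the bad partial sum of $Q_n$ from one point to the $k_n$ centers $j/k_n$. The polynomial $Q_n$ you describe has $|S_nQ_n|$ large only near one specific point (say near $0$); at $x=j/k_n$ with $j\neq 0$ the quantity $|S_nQ_n(x)|$ is bounded. There is no general identity of the type $S_n(f_nQ_n)\approx f_n\cdot S_nQ_n$, and even if there were, the factor $f_n(j/k_n)/\omega_n\sim 1$ after normalization would leave you with $|S_nQ_n(j/k_n)|$, which is not large. So the claimed lower bound ``of order $\log L_n/\log\omega_n$'' at the centers is unsupported; neither the numerator nor the denominator arises from the computation you sketch.

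The paper's argument is quite different and exploits Lemma~\ref{LEMHOLO} in a way your proposal does not. One takes the \emph{logarithm} $h_n=\log f_n$, which is holomorphic on a neighbourhood of $\overline{\DD}$ thanks to (\ref{EQP1}); this is why $\Re e f_n>0$ is recorded. Then $g_n(x)=h_n(e^{2\pi i x})$ satisfies $|g_n|\ge\log\omega_n-C$ on $J_{k_n}^{\omega_n}$ by (\ref{EQP2}), $|g_n|\le\log\omega_n+C$ on $\TT$ by (\ref{EQP3}), and $|g_n'|\le 2\pi k_n\omega_n$ by (\ref{EQP4}); the last bound is the reason (\ref{EQP4}) controls $f'/f$ rather than $f'$. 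A quantitative Fej\'er estimate then gives $|\sigma_n g_n|\ge\tfrac12\log\omega_n-C$ on $J_{k_n}^{\omega_n}$, once $k_n,\omega_n$ are chosen so that $2\pi k_n\omega_n\le n$. Crucially, $|\Im m\, g_n|\le\pi/2$ everywhere (argument of a number with positive real part), so $P_n:=\tfrac{2}{\pi}e_n\,\Im m(\sigma_n g_n)$ has $\|P_n\|_\infty\le 1$. Because $g_n$ is the boundary value of a holomorphic function vanishing at $0$, $\sigma_n g_n$ has only positive frequencies, and one checks that $|S_nP_n(x)|=\tfrac1\pi|\sigma_n g_n(x)|\ge\tfrac1{2\pi}\log\omega_n-C$ on $J_{k_n}^{\omega_n}$. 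Choosing $\omega_n=\exp\big(4\pi\veps_n\log n\big)$ (and assuming without loss $\veps_n\ge\log\log n/(4\pi\log n)$, so that $\omega_n\ge\log n$ and $\omega_n=o(n^\alpha)$) then yields $|S_nP_n|\ge\veps_n\log n$. The point is that the largeness of $|S_nP_n|$ comes from the \emph{real} part of $\log f_n$, while the sup-norm bound on $P_n$ comes from the boundedness of the \emph{imaginary} part; analyticity is what links the two after taking $S_n$. Your product construction has no analogue of this mechanism.
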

\begin{proof}
It is clear that the conclusion of the lemma is more difficult to obtain when the sequence $(\veps_n)$ is large.
Thus, we may assume that
$$\veps_n\geq\frac{\log \log n}{4\pi\log n}.$$
In particular, $\veps_n\log n$ goes to infinity. 
We define $k_n$ and $\omega_n$ by
\begin{itemize}
\item $\omega_n$ is equal to $\exp\big(4\pi(\log n)\veps_n\big)$
\item $k_n$ is the biggest integer $k$ satisfying 
$$2\pi k\omega_n\leq n.$$
\end{itemize}
Observe that $\omega_n\ge\log n$ and $\omega_n=o(n^\alpha)$ for all
$\alpha>0$. Then, the inequalities 
$$2\pi k_n\omega_n\leq n\leq 2\pi (k_n+1)\omega_n$$ 
ensure that 
$$k_n\le n\le C k_n n^{1/2}$$
if $n$ is large enough.
It follows that $(k_n)$ goes to $+\infty$, that $\omega_n\geq\log k_n$ and
that $\omega_n=o(k_n^\alpha)$ for any $\alpha>0$.

Let $f_n$ be the holomorphic function given by Lemma \ref{LEMHOLO} for the values $k=k_n$ and $\omega=\omega_n$.
We take $h_n(z)=\log(f_n(z))$, which defines a holomorphic function
in a neighbourhood of $\overline{\DD}$ (remember (\ref{EQP1})). Moreover, $|\Im
m(h_n(z))|\leq\pi/2$ for any $z\in\overline{\DD}$ and $h_n(0)=0$. 
Now, we look at
the function $h_n$ on the boundary of the unit disk $\DD$, that is we introduce
the function $g_n(x)=h_n(e^{2i\pi x})$ defined on the circle 
$\TT=\mathbb R/\mathbb Z$. The properties satisfied by $f_n$ translate into 
\begin{eqnarray*}
\forall x\in J_{k_n}^{\omega_n},\quad |g_n(x)|&\geq&\log \omega_n+\log C_2\\
\forall x\in\TT,\quad |g_n(x)|&\leq&\log \omega_n +\log C_3\\
\forall x\in\TT,\quad |g_n'(x)|&\leq&2\pi k_n\omega_n\leq n.
\end{eqnarray*}
We apply Lemma 1.7 of \cite{BH11}, which is a precised version of F\'ejer's theorem, to the function
$\theta_x(t)=g_n(t)-g_n(x)$ for $x\in\TT$.  
Since $\|\theta_x\|_\infty\leq 2\log\omega_n+2\log C_3$, $\|\theta_x'\|_\infty\leq n$ and $\theta_x(x)=0$, we get
$$|\sigma_n\theta_x(x)|\leq\frac12\log \omega_n +C_4$$
for some absolute constant $C_4$. If $x\in J_{k_n}^{\omega_n}$ we deduce that 
$$|\sigma_n g_n(x)|\geq \frac12\log \omega_n -C_5.$$
Finally we set $$P_n=\frac{2}\pi e_{n}\,\sigma_n(\Im m g_n)=\frac{2}\pi e_{n}\Im m (\sigma_n g_n),$$
so that $\|P_n\|_\infty\leq 1$.
Now, remember that $g_n$ is the restriction to the circle of an holomorphic
function $h_n$ satisfying $h_n(0)=0$. We can then write $\sigma_n
g_n=\sum_{j=1}^{n-1} a_j e_j$, so that $2i\Im m\ \sigma_n
g_n=-\sum_{j=1}^{n-1}\overline{a_j}e_{-j}+\sum_{j=1}^{n-1} a_j e_j$. Thus, the spectrum of $P_n$ is contained in $[1,2n-1]$. Moreover, for any 
$x\in J_{k_n}^{\omega_n}$, we get 
\begin{eqnarray*}
|S_n P_n(x)|&=&\frac 1\pi\left|\sum_{j=1}^{n-1}\overline{a_j}e_{-j+n}\right|\\
&=&\frac 1\pi| \sigma_n g_n(x)|\\
&\geq&\frac1{2\pi}\log \omega_n-C_6\\
&=&2\veps_n\log n-C_6\\
&\geq& \veps_n\log n
\end{eqnarray*}
if $n$ is large enough.
\end{proof}
We are now ready to construct the dense $G_\delta$-set of functions required in Theorem \ref{THMCT}.
\begin{proof}[Proof of Theorem \ref{THMCT}]
Let $(\delta_n)_{n\ge 2}$ be a sequence going to 0. We first consider an
auxiliary sequence $(\delta'_n)_{n\ge 1}$ such that 
$$\lim_{n\to +\infty}\delta'_n= 0,\quad\lim_{n\to
  +\infty}\frac{\delta'_n}{\delta_n}=+\infty\quad\mbox{and}\quad \lim_{n\to
  +\infty}\delta'_n\log n=+\infty\ .$$ 
Let $(g_n)_{n\ge 1}$ be a dense sequence in $\mathcal C(\TT)$, such that the
spectrum of $g_n$  is contained in $[-n,n]$. We set $\eta_n=\max(\delta'_k;\ n\leq k)$. The
sequence $(\eta_n)_{n\ge 1}$ decreases to zero. Moreover, we fix a sequence
$(\veps_n)_{n\ge 1}$, going to zero, such that $\veps_n/\eta_n$ tends to infinity.
Lemma \ref{LEMCLECT} gives us an integer $N$, a sequence $(P_j)_{j\ge N}$ of trigonometric polynomials with spectrum contained in $[1,2j-1]$,
a sequence $(k_j)_{j\ge N}$ of integers going to $+\infty$ and a sequence
$(\omega_{j})_{j\ge N}$ satisfying $\omega_j>1$, such that
$$|S_j P_j(x)|\geq\veps_j\log j$$
for any $x\in J_{k_j}^{\omega_{j}}$. Moreover, we can choose $\omega_j$ such
that $\omega_{j}=o(k_j^\alpha)$ for any $\alpha>0$.

Let us define for $j\ge N$
$$h_j:=g_j+\frac{\eta_j}{\veps_j}e_jP_j.$$
The sequence $(h_j)_{j\ge N}$  remains dense in $\mathcal C(\TT)$. 
Let us also observe that the spectra of $g_j$ and
$\frac{\eta_j}{\veps_j}e_jP_j$ are disjoint. It follows 
that if $x\in J_{k_j}^{\omega_j}$,
$$|S_{2j}h_j(x)-S_j h_j(x)|=\left|\frac{\eta_j}{\veps_j}S_jP_j(x)\right|\ge\eta_j\log j.$$
Thus, for any $x\in J_{k_j}^{\omega_{j}}$, one may find $n\in\{j,2j\}$ such that
$$|S_n h_j(x)|\geq\frac12 \eta_j\log j\geq\frac12\delta'_n(\log n-\log 2).$$
Let $r_j>0$ be small enough so that
$$|S_n h(x)|\geq |S_n h_j(x)|-1$$
for any $h\in B(h_j,r_j)$ and any $n\in\{j,2j\}$ (the open balls are related
to the norm $\|\ \|_\infty$).

Then, we claim that the following dense $G_\delta$-set of $\mathcal C(\TT)$ fulfills all the requirements:
$$G:=\bigcap_{p\ge N}\bigcup_{j\geq p}B(h_j,r_j).$$
Indeed, pick any $h$ in $G$ and any increasing sequence $(j_p)$ such that $h$ belongs to $B(h_{j_p},r_{j_p})$.
Setting $\rho_p=\omega_{j_p}$ and $s_p=k_{j_p}$, it is not hard to show that 
$$E:=\limsup_{p\to+\infty} E_p,\textrm{ with }E_p=J_{s_p}^{\rho_p}$$
has Hausdorff dimension 1. Indeed, remember that for any $\alpha>0$, $\omega_j=o(k_j^\alpha)$. It follows for any $\alpha>0$ and for $p$ large enough, $E_p$ contains
$$F_p=\bigcup_{j=0}^{s_p-1}\left[\frac{j}{s_p}-\frac{1}{2s_p^{1+\alpha}};\frac{j}{s_p}+\frac{1}{2s_p^{1+\alpha}}\right],$$
Now, it is well-known that $\limsup_p F_p$ has Hausdorff dimension equal to $1/(1+\alpha)$ (this follows for instance from
the mass transference principle of \cite{BV06}). Finally, $\dim_{\mathcal H}(E)\ge\frac1{1+\alpha}$. 

Moreover, for any $x\in E$,
the work done before and the fact that $\delta'_n\log n$ goes to $+\infty$ show that 
$$|S_n h(x)|\geq\frac12\delta'_n(\log n-\log 2)-1\ge\frac14\delta'_n\log n$$
for infinitely many values of $n$. We then get
$$\frac{|S_n h(x)|}{\delta_n\log n}\geq\frac{\delta'_n}{4\delta_n}$$
for infinitely many values of $n$. 
This achieves the proof of Theorem \ref{THMCT}.
\end{proof}
We can finally construct the prevalent set of functions required in Theorem \ref{THMCTPREVALENT}. 
\begin{proof}[Proof of Theorem \ref{THMCTPREVALENT}]
Let $(\delta_n)_{n\ge 2}$ be a sequence going to 0 and denote by $A$ the set of continuous  functions $f\in\mathcal C(\TT)$ such that
$$\dim_{\mathcal H}\left(\left\{ x\in\TT\ ;\ \limsup_{n\to+\infty}\frac{|S_nf(x)|}{\delta_n\log n}=+\infty\right\}\right)<1.$$
We have to prove that $A$ is Haar-null in $\mathcal C(\TT)$.

Let $f_0$ be a fixed function in the complementary of $A$ (such a function does exist by Theorem \ref{THMCT}) and let $g$ be an arbitrary function in $\mathcal C(\TT)$. Suppose that $t_1$ and $t_2$ are two real numbers such that
$$t_1f_0\in(g+A)\quad\mbox{ and }\quad t_2f_0\in(g+A).$$
We can then find $f_1\in A$ and $f_2\in A$ such that $(t_1-t_2)f_0=f_1-f_2$. It is clear that $f_1-f_2\in A$ ($A$ is a vector subspace of $\mathcal C(\TT)$). It follows that $t_1=t_2$, so that
$$\#\left(\mbox{span}\,(f_0)\cap(g+A)\right)\le 1.$$
In particular, the Lebesgue-measure in $\mbox{span}\,(f_0)$ is transverse to $A$ and $A$ is Haar-null in  $\mathcal C(\TT)$.
\end{proof}
\begin{rem}
 We have just only proved that a proper subspace in a complete metric vector space is Haar-null. This property is probably well-known.
\end{rem}

\end{document}